\theoremstyle{plain}
    \newtheorem{thm}{Theorem}[section]
    \newtheorem{lem}[thm]   {Lemma}
    \newtheorem{prop}[thm]  {Proposition}
\theoremstyle{definition}
    \newtheorem{defn}[thm]  {Definition}
\newcommand{\C}{\mathbb{C}} %complex plane
\newcommand{\fH}{\mathfrak{H}} %Euclidean Hessian
\newcommand{\R}{\mathbb{R}} %real numbers
\newcommand{\fS}{\mathfrak{S}} %symmetric group
\newcommand{\T}{\mathbb{T}}
\newcommand{\V}{\mathcal{V}} %a variety
\newcommand{\Z}{\mathbb{Z}} %integers
\newcommand{\TC}{\mathbf{TC}}
\newcommand{\cat}{\mathbf{cat}}
\newcommand{\utheta}{\underline{\theta}}
\newcommand{\ind}{\mathrm{ind}} %index of a critical manifold
\newcommand{\ui}{\underline{i}} %multiindex
\newcommand{\Homog}{\mathrm{Homog}} %set of all homogeneisations
\newcommand{\crit}{\mathrm{crit}} %critical locus of a function
\newcommand{\pa}[1]{\left(#1\right)}
\newcommand{\set}[1]{\left\{#1\right\}}
\newcommand{\Sone}{\mathbb{S}^1}
\renewcommand{\phi}{\varphi}
\renewcommand{\epsilon}{\varepsilon}
\DeclareMathOperator{\Id}{Id}
\begin{document}

\title[An upper bound on TC of discriminantal varieties]{An upper bound on the topological complexity of discriminantal varieties}

\author{Andrea Bianchi}

\address{Mathematics Institute,
University of Copenhagen,
Universitetsparken 5, Copenhagen,
Denmark
}

\email{anbi@math.ku.dk}

\date{\today}

\keywords{Topological complexity, configuration space, affine variety, equivariant topological complexity}
\subjclass[2010]{
55M30 %Lyusternik-Shnirel'man category of a space, topological complexity à la Farber, topological robotics (topological aspects)
55R80 %Discriminantal varieties and configuration spaces in algebraic topology
14L30 %Group actions on varieties or schemes (quotients)
}

\begin{abstract}
We give an upper bound on the topological complexity of varieties $\V$ obtained as complements
in $\C^m$ of the zero locus of a polynomial. As an application, we determine the topological complexity
of unordered configuration spaces of the plane.
\end{abstract}

%\thanks{}

\maketitle
\section{Introduction}
Let $X$ be a path-connected topological space. 
% The motion planning problem on $X$ asks
% to assign to each couple $x_1,x_2$ of points in $X$
% a continuous path $\gamma\colon[0,1]\to X$ joining $x_1$ to $x_2$. This problem is central
% to robotics (think of $X$ as the space of configurations of a robot, or a system of robots);
% a natural question is \emph{how robust a motion planner on $X$ can be}, or equivalently,
% \emph{how many discontinuities do we have to allow in our choice of paths}.
The topological complexity $\TC(X)$ is a numerical invariant introduced by Farber \cite{Farber} to
measure the complexity of a motion planner on $X$.
Denote by $X^{[0,1]}$ the path-space of $X$, whose points are all continuous maps $\gamma\colon[0,1]\to X$,
and by $p_X\colon X^{[0,1]}\to X\times X$ the fibration which assigns to each path $\gamma\colon[0,1]\to X$
(i.e. each element $\gamma\in X^{[0,1]}$) the couple $(\gamma(0),\gamma(1))\in X\times X$.
\begin{defn}[Topological complexity]
\label{defn:TC}
A \emph{motion planner} on a path-connected space $X$ is the datum of:
\begin{itemize}
 \item a covering of $X\times X$ with open sets $U_0,\dots,U_k$;
 \item for each $U_i\subset X\times X$, a section $s_i\colon U_i\to X\times X$ of the fibration $p_X$.
\end{itemize}
The \emph{topological complexity} of $X$, denoted $\TC(X)$, is the minimum $k$ for which there exists a motion planner
on $X$ with $k+1$ open sets $U_0,\dots,U_k$ as above.
\end{defn}
The connection of the previous definition to robotics is immediate: if $X$ is the space of configurations
that a robot (or a system of robots) can assume, a motion planner on $X$ will give, for
each couple $x_1,x_2$ of points in $X$, a path to pass from the configuration $x_1$ to the configuration $x_2$.
The number $k$ measures the ``robustness'' of a motion planner: the smaller is $k$, the less
the motion planner is sensitive to small perturbations of $x_1$ and $x_2$ and thus the more it is reliable.

A variation of the notion of topological complexity is the \emph{equivariant topological complexity},
introduced by Colman and Grant \cite{CG12}.
Let $X$ be a path-connected space with an continuous action of a topological group $G$;
then $G$ acts diagonally on $X\times X$ and on $X^{[0,1]}$ (where in the second case the ``diagonal''
is the natural map $G\mapsto G^{[0,1]}$ sending $g\in G$ to the constant function $[0,1]\to G$ with
value $g$). Note that the fibration $p_X\colon X^{[0,1]}\to X\times X$ is a $G$-equivariant map.
\begin{defn}[Equivariant topological complexity]
A $G$-motion planner on $X$ is given by:
\begin{itemize}
 \item a covering of $X\times X$ by $G$-invariant open sets $U_0,\dots,U_k$;
 \item for each $U_i\subset X\times X$, a $G$-equivariant section $s_i\colon U_i\to X\times X$ of the fibration $p_X$.
\end{itemize}
The \emph{$G$-equivariant topological complexity} of $X$, denoted $\TC_G(X)$, is the minimum $k$ for which there exists a $G$-motion planner on $X$ with $k+1$ open sets $U_0,\dots,U_k$ as above.
\end{defn}
Clearly a $G$-motion planner is also a motion planner, hence $\TC(X)\leq \TC_G(X)$ for all path-connected $G$-spaces $X$.

In this paper we address the problem of bounding from above the topological complexity of 
algebraic varieties $\V$
obtained as complements in $\C^k$ of the zero locus of a polynomial.

\begin{defn}
 \label{defn:discriminantal}
 A \emph{discriminantal variety} is a complex algebraic variety $\V$ obtained as
 the complement in $\C^m$ of the zero locus of a non-zero polynomial $\Delta(z_1,\dots,z_m)$,
 called the \emph{discriminant}.
\end{defn}
As we will see, discriminantal varieties
often carry a natural, nice action of an $s$-dimensional torus $\T^s=(\Sone)^s$, so that we can compare $\TC(\V)$
and $\TC_{\T^s}(\V)$.

The motivating
example for considering discriminantal varieties is given by
\emph{configuration spaces of points in the plane}, which come usually in two different flavours.
\begin{defn}
 \label{defn:conf}
 The \emph{ordered configuration space} of $n$ points in $\C$ is the space
 \[
  F_n=\set{(w_1,\dots,w_n)\in\C^n\ | w_i\neq w_j\ \forall i\neq j}.
 \]
 There is a natural action of the symmetric group $\fS_n$ on $F_n$, by permuting the labels of a configuration.
 The quotient space is the \emph{unordered configuration space} of $n$ points in $\C$, and is denoted
 \[
  C_n:=F_n/\fS_n.
 \]
\end{defn}
Roughly speaking, the topological complexities $\TC(F_n)$ and $\TC(C_n)$ measure
the complexity of a motion planner which coordinates $n$ robots in the plane,
avoiding collisions between them. The difference between the two cases is subtle:
in the first case each robot is labeled by a number $1,\dots,n$, and the motion
planner takes as input, for each robot, its initial and its final position;
in the second case the robots are indistinguishable, and the motion planner's
task is to move all robots together, starting from the set of $n$ initial positions, so that
at the end the robots occupy (in some order) the $n$ final positions.

One can consider the action of $\T=\Sone=SO(2)$ by rotations on both $F_n$ and $C_n$:
then the topological complexities $\TC_{\T}(F_n)$ and $\TC_{\T}(C_n)$ measure
the complexity of a motion planner coordinating $n$ ordered or unordered 
robots in the plane, with the additional hypothesis that the motion planner should
be equivariant with respect to a ``change of coordinate system'' by rotation.
A motion planner with this property might be particularly useful, for instance,
because it prescribes to the robots the same behaviour independently of the spatial
point of view of the observer who measures their positions while coordinating them.

The topological complexity of $F_n$ was computed by Farber and Yuzvinsky \cite{FY} and is equal to
$2n-3$. For the space $C_n$, Recio-Mitter and the author \cite{BianchiRecio} proved
the bounds $2n-2\lfloor\sqrt{n/2}\rfloor-3\leq \TC(C_n)\leq 2n-2$, conjectured
that $\TC(C_n)=2n-3$, and showed that the lower bound $2n-3\leq \TC(C_n)$ would follow 
from a proof that the cohomological dimension of $[P_n,P_n]$
is equal to $n-2$, where $P_n=\pi_1(F_n)$ is the \emph{pure braid group on $n$ strands}
and $[P_n,P_n]$ is its commutator subgroup. The equality $[P_n,P_n]=n-2$ was then
proved by the author \cite{Bianchi:HcPn}. In this paper we prove the upper bound for $\TC(C_n)$,
and compute also the $\T$-equivariant topological complexities of $F_n$ and $C_n$.
\begin{thm}
 \label{thm:upperbound}
 Consider the action of $\T=\Sone=SO(2)$ on $F_n$ and $C_n$ by rotation of configurations of points
 in the plane. For all $n\geq 2$ we have $\TC_{\T}(F_n)\leq 2n-3$ and $\TC_{\T}(C_n)\leq 2n-3$. 
\end{thm}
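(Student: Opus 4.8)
The plan is to deduce Theorem~\ref{thm:upperbound} from the general upper bound for discriminantal varieties that forms the main result of the paper, after exhibiting $F_n$ and $C_n$ as discriminantal varieties carrying a natural $\T=\Sone$-action. For $F_n\subset\C^n$ one takes as discriminant $\Delta(z_1,\dots,z_n)=\prod_{i<j}(z_i-z_j)$, and $\T$ acts diagonally by scalar multiplication, $\lambda\cdot(w_1,\dots,w_n)=(\lambda w_1,\dots,\lambda w_n)$; this linear action leaves $\Delta$ homogeneous and leaves $F_n$ invariant. For $C_n$ one uses the classical identification with the space of squarefree monic polynomials $z^n+a_1z^{n-1}+\dots+a_n$, the complement in $\C^n$ of the vanishing locus of the polynomial discriminant $\Delta(a_1,\dots,a_n)$; rotating the roots by $\lambda$ multiplies $a_k$ by $\lambda^k$, so $\T$ acts linearly on $\C^n$ with weights $(1,2,\dots,n)$ and $\Delta$ is quasi-homogeneous with strictly positive weights for this action. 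In either case the origin lies outside the variety, since $z^n$ is not squarefree. The entire gain over the elementary estimate $\TC(C_n)\le 2n-2$ — which only uses that $C_n$ has the homotopy type of an $(n-1)$-dimensional CW complex — will come from exploiting this discriminantal structure.

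To squeeze out the sharp constant $2n-3$ I would first lower the ambient dimension by one. Translating each configuration so that its centre of mass sits at the origin is $\T$-equivariant (the centre of mass of $\lambda\cdot w$ is $\lambda$ times that of $w$), and on the polynomial side it is the Tschirnhaus substitution $z\mapsto z-a_1/n$ killing $a_1$. This yields $\T$-equivariant splittings $F_n\cong_{\T}\C\times F_n^0$ and $C_n\cong_{\T}\C\times C_n^0$, in which the first factor is a copy of $\C$ with the scalar $\T$-action, hence $\T$-equivariantly contractible with $\TC_{\T}(\C)=0$ (straight-line paths give a global $\T$-equivariant section of the path fibration), while $F_n^0\subset\C^{n-1}$ and $C_n^0\subset\C^{n-1}$ are again discriminantal varieties, with quasi-homogeneous discriminant of strictly positive weights and not containing the origin. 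Feeding these $(n-1)$-dimensional varieties into the general theorem, and combining with the equivariant product inequality $\TC_{\T}(X\times Y)\le\TC_{\T}(X)+\TC_{\T}(Y)$, one gets $\TC_{\T}(F_n)\le\TC_{\T}(F_n^0)$ and $\TC_{\T}(C_n)\le\TC_{\T}(C_n^0)$, and a short bookkeeping of the general bound in ambient dimension $n-1$ produces $2n-3$ in both cases.

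For the ordered space $F_n$ there is in addition a completely elementary verification, which I would keep as a check and which also pinpoints the main difficulty. The affine group $\mathrm{Aff}=\C\rtimes\C^\times$ acts freely on $F_n$ for $n\ge 2$, and the normalisation sending the first two points of a configuration to $0$ and $1$ trivialises the resulting principal bundle, giving a $\T$-equivariant homeomorphism $F_n\cong_{\T}\C\times\C^\times\times F_{n-2}(\C\setminus\set{0,1})$, where $\T=\Sone\subset\mathrm{Aff}$ acts by scalar multiplication on the $\C$-factor, by multiplication on $\C^\times$, and trivially on the ordered configuration space $F_{n-2}(\C\setminus\set{0,1})$ of $n-2$ points in $\C\setminus\set{0,1}$. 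Since $\TC_{\T}(\C)=0$, since $\TC_{\T}(\C^\times)=1$ (as $\C^\times$ is $\T$-equivariantly $\T\times\R_{>0}$ and the standard equivariant motion planner on $\T$ has two charts), and since $F_{n-2}(\C\setminus\set{0,1})$ is a smooth affine variety of complex dimension $n-2$, hence has the homotopy type of an $(n-2)$-complex with $\TC\le 2(n-2)$, the equivariant product inequality gives $\TC_{\T}(F_n)\le 0+1+2(n-2)=2n-3$. The main obstacle is precisely that this shortcut fails for $C_n$: on rotationally symmetric configurations both the affine and the torus actions have non-trivial isotropy, so $C_n$ is not $\T$-equivariantly a product with a torus factor, and the upper bound must instead be extracted from the local geometry of $\C^{n-1}$ along the discriminant hypersurface — which is exactly what the general theorem is designed for. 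I expect the delicate point to be checking that $C_n^0$, with its non-free $\T$-action, meets the hypotheses of the general theorem and that the resulting bound is still the sharp $2n-3$.
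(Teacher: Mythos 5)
Your main argument is essentially the paper's own: pass $\T$-equivariantly to the barycentre-zero subspaces $F_n^0,\,C_n^0\subset\C^{n-1}$, exhibit them as discriminantal varieties with a scalar $\T$-action (weights $(1,\dots,1)$ for $F_n^0$ and $(2,\dots,n)$ for $C_n^0$, via the elementary symmetric functions with $a_1$ suppressed), note that strictly positive weights together with the exclusion of the origin force all stabilisers to be finite so that $t=0$, and apply Theorem~\ref{thm:main} with $m=n-1$, $s=1$, $t=0$ to get $2(n-1)-1+0=2n-3$. One presentational point and one substantive one. The reduction from $F_n$ to $F_n^0$ does not need any equivariant product inequality: since $\C$ with the scalar action is $\T$-equivariantly contractible, the splitting $F_n\cong_\T\C\times F_n^0$ already gives a $\T$-homotopy equivalence $F_n\simeq_\T F_n^0$, and $G$-homotopy invariance of $\TC_G$ suffices — this is exactly what the paper does via the explicit $\T$-equivariant deformation retraction $\rho$. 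Your affine-group cross-check $F_n\cong_\T\C\times\C^\times\times F_{n-2}(\C\setminus\{0,1\})$, giving $\TC_\T(F_n)\le 0+1+2(n-2)=2n-3$, is a genuinely different and more elementary route for the ordered case not in the paper, and you correctly diagnose why it cannot work for $C_n$ (non-free $\T$-action on symmetric configurations); however, it relies essentially on the product inequality $\TC_\T(X\times Y)\le\TC_\T(X)+\TC_\T(Y)$ applied to the non-contractible factor $\C^\times$, which is not established in Colman--Grant in this generality and would need its own proof (e.g.\ adapting Schwarz's partition-of-unity argument and averaging over the compact group $\T$), so it should be treated as heuristic until that is supplied.
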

Note that, together with the already known lower bound extimate, we obtain the chain of equalities
$\TC(F_n)=\TC_{\T}(F_n)=\TC(C_n)=\TC_{\T}(C_n)=2n-3$ for all $n\geq 2$.
We will obtain Theorem \ref{thm:upperbound} as an application of the following, more general statement.
\begin{thm}
\label{thm:main}
 Let $\V\subset\C^m$ be a discriminantal variety. Assume
 that, for some $s\geq 0$, the torus $\T^s=(\Sone)^s$ acts \emph{by scalar multiplication} on $\V$ (see Definition
 \ref{defn:scalaraction}), and assume that all stabilisers of the action have dimension $\leq t$. Then we have
 \[
  \TC(\V)\leq \TC_{\T^s}(\V)\leq 2m-s+t.
 \]
\end{thm}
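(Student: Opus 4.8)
The plan is to combine a dimension bound for discriminantal varieties with an analysis of the torus action through its complexification and the associated affine quotient. First I would record that $\V=\C^m\setminus Z(\Delta)$ is a \emph{smooth affine} variety of complex dimension $m$: it is open in $\C^m$, with coordinate ring the localisation $\C[z_1,\dots,z_m][1/\Delta]$. By the Andreotti--Frankel theorem it has the homotopy type of a CW complex of dimension $\le m$, so $\cat(\V\times\V)\le 2m$ and hence $\TC(\V)\le 2m$; this is Theorem \ref{thm:main} when $s=0$, and as $\TC(\V)\le\TC_{\T^s}(\V)$ is automatic the whole task is to bound $\TC_{\T^s}(\V)$.

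\emph{The complexified quotient.} Since $\T^s$ acts on $\V$ by scalar multiplication, this extends to an algebraic action of the complex torus $G=(\C^*)^s$ on $\C^m$ preserving $\V$. As $\V$ is affine and $G$ reductive, the affine GIT quotient $Y:=\V/\!\!/G$ is again affine, with $\dim_\C Y=m-c$ where $c$ is the complex dimension of a generic $G$-orbit. The hypothesis that every stabiliser of the $\T^s$-action has dimension $\le t$ is equivalent to every $G$-orbit having dimension $\ge s-t$, so $c\ge s-t$ and $\dim_\C Y\le m-s+t$. Applying the Andreotti--Frankel bound to the (possibly singular) affine variety $Y$ gives $\TC(Y)\le 2\dim_\C Y=2(m-c)$.

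\emph{From the quotient to equivariant TC.} The core of the argument is the inequality
\[
\TC_{\T^s}(\V)\ \le\ \TC(Y)+c,
\]
which together with the above yields $\TC_{\T^s}(\V)\le 2(m-c)+c=2m-c\le 2m-s+t$. To prove it I would start from an optimal motion planner on $Y$: an open cover $\bar U_0,\dots,\bar U_p$ of $Y\times Y$ with $p=\TC(Y)$ and continuous path-sections. Because $G$ acts trivially on $Y$, pulling back along $\V\times\V\to Y\times Y$ gives a $\T^s$-invariant open cover $U_0,\dots,U_p$ of $\V\times\V$, and over each $U_j$ I must upgrade the local section to a $\T^s$-equivariant section of $\V^{[0,1]}\to\V\times\V$. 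Over a piece where $\V\to Y$ is a trivial bundle with fibre a generic orbit $G/H$, a point of $\V$ is recorded by its image in $Y$ together with a coordinate in $G/H$; lifting the path coming from $\bar U_j$ produces a path in $\V$ from $x_1$ to a point over the image of $x_2$, and it remains to move inside the single orbit $G\cdot x_2\cong G/H$ to $x_2$. Since $G/H$ retracts onto the torus $\T^s/(H\cap\T^s)\cong\T^c$ and $\T^s$ acts on an orbit by translations, this residual step is a $\T^s$-equivariant motion-planning problem on $\T^c$, contributing at most $c$ to the topological complexity. The delicate point is to perform the combination \emph{additively} --- interleaving the pulled-back cover with the torus-direction covers as one does for products, rather than taking all pairwise intersections --- so that the total count is $p+c$.

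\emph{Main obstacle.} I expect the real work to be this last step, and specifically the non-free locus. Where the action is free (or has finite stabilisers) the torus-direction planning is clean, but over strata with positive-dimensional stabilisers the orbits $G/H$ shrink, the local trivialisations of $\V\to Y$ must be chosen compatibly, and the quotient map fails to be a fibre bundle; this is exactly where the correction $+t$, as opposed to the value $2m-s$ appearing for a free action, enters. Two secondary but standard points to invoke carefully are the Andreotti--Frankel dimension bound for the possibly singular $Y$, and the verification that the assembled data is genuinely a $\T^s$-motion planner with the claimed additive number of open sets.
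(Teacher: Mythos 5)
Your approach is genuinely different from the paper's, which works directly on $\V\times\V$ rather than passing to a quotient. The paper constructs an explicit $\T^s$-invariant exhaustion function $g(z)=\sum|z_j|^2+|1/\Delta|^2$, shows via an Andreotti--Frankel Hessian estimate that the Euclidean Hessian of $g$ has positivity index $\geq m$ everywhere, sums two copies to get $\tilde f$ on $\V\times\V$ with positivity index $\geq 2m$, perturbs $\tilde f$ to a $\T^s$-invariant Morse function by Wasserman's density theorem, and then uses the fact that each critical manifold is an orbit $\T^s/H$ of dimension $\geq s-t$ to absorb $s-t$ into the nullity index; the Morse index is therefore $\leq 4m-2m-(s-t)=2m-s+t$. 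Wasserman's equivariant Morse theory then produces a $\T^s$-CW complex of index $\leq 2m-s+t$ onto which $\V\times\V$ retracts equivariantly, and the chain $\TC_{\T^s}(\V)\leq\cat_{\T^s}(\V\times\V)\leq\mathrm{ind}(X)$ finishes the argument.

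The serious problem with your route is the central inequality $\TC_{\T^s}(\V)\leq\TC(Y)+c$, which you state as ``the core of the argument'' and then leave unproved. This inequality is not a standard result, and the heuristic you give for it does not survive scrutiny at exactly the places you flag as ``delicate.'' The map $\V\to Y=\V/\!\!/(\C^*)^s$ is not a fibration: it has fibres of varying dimension, the GIT quotient collapses non-closed orbits, and over the non-free locus no local trivialisation of the form ``$Y$-coordinate plus $G/H$-coordinate'' exists. Your plan to ``interleave'' a pulled-back cover of $Y\times Y$ with orbit-direction covers, so that the count is additive rather than multiplicative, is exactly the kind of statement that needs a lemma with hypotheses --- and the hypotheses one would want (fibre bundle, constant stabiliser, section or connection) fail here. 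Since the whole content of the theorem beyond the trivial $s=0$ case is contained in that unproved step, the proposal as written does not constitute a proof. You also silently switch between the compact torus $\T^s$ (which governs $\TC_{\T^s}$) and its complexification $(\C^*)^s$ (which governs the GIT quotient); the Kempf--Ness type comparison needed to justify this is another non-trivial ingredient that is not addressed. The dimension bookkeeping ($\dim_\C Y\leq m-s+t$, stratified Andreotti--Frankel for the possibly singular $Y$) is plausible but secondary; the real gap is the unproved passage from $\TC(Y)$ to $\TC_{\T^s}(\V)$, which the paper's proof is specifically designed to avoid by never forming a quotient at all.
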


The paper is organised as follows. In Sections 2, 3 and 4 we recall some standard material about actions
of groups on spaces, equivariant Morse theory and CW complexes in the equivariant setting. The exposition
is in a few points non-standard, and it is adapted to the content of the rest of the paper.
In Section 5 we prove Theorem \ref{thm:main} up to finding a $\T^s$-invariant Morse function
on $\V\times\V$ which satisfies some additional properties. The existence of such a function
is shown in Section 6. Finally, in Section 7 we apply Theorem \ref{thm:main} to prove
Theorem \ref{thm:upperbound}, and in Section 8 we briefly discuss the problem of finding an explicit, $\T$-equivariant
motion planner on $C_n$ (the case of $F_n$ being quite analogue).

Theorem \ref{thm:upperbound} was first announced at the UMI-SIMAI-PTM joint meeting in September 2018,
where I received some useful comments by Mark Grant and Paolo Salvatore, whom I thank.
I would also like to thank David Recio-Mitter for many useful conversations about motion planners,
and Mirko Mauri for pointing me to the Lefschetz hyperplane theorem.

% \section{Topological complexity and Euclidean neighbourhood retracts}
% If $X$ is an ENR (Euclidean neighbourhood retract), e.g. $X$ is homeomorphic to a CW complex,
% then instead of \emph{covering} $X\times X$ with open sets, one can \emph{decompose} $X\times X$ into disjoint ENRs $E_0,\dots,E_k$.
% 
% \begin{lem}[\cite{Farber06}]
% \label{lem:ENR}
% Recall Definition \ref{defn:TC}.
% If $X$ is an ENR, $\TC(X)$ is equal to the smallest $k\geq0$
% such that there exists a decomposition $X\times X=E_0\sqcup E_1\sqcup\cdots\sqcup E_k$
% into $k+1$ disjoint ENRs, on each of which the fibration $p_X$ admits a section
% $s_i\colon E_i\to X^{[0,1]}$.
% The existence of such a section $s_i\colon E_i\to X^I$ is equivalent to the existence of a deformation of $E_i$ into the diagonal of $X\times X$, i.e. a homotopy between the inclusion $E_i\hookrightarrow X\times X$ and a map whose image lies entirely in the diagonal.
% \end{lem}
% In the rest of the article we will work with this equivalent definition.

\section{Actions of groups on spaces}
Before starting we introduce some classical notions and some notation related to spaces
with an action of a group. We fix a compact, connected Lie group $G$ throughout the next three sections.

\begin{defn}
 \label{defn:Gequivhequiv}
 Two $G$-spaces $X$ and $Y$ are $G$-equivariantly homotopy equivalent if there are two $G$-equivariant
 maps $\alpha\colon X\to Y$ and $\beta\colon Y\to X$ such that $\beta\circ\alpha\colon X\to X$
 is homotopic to $\Id_X$ through $G$-equivariant maps $X\to X$, and $\alpha\circ\beta$ is
 homotopic to $\Id_Y$ through $G$-equivariant maps $Y\to Y$.
\end{defn}

% We will later also need a relative version of the previous definition.
% \begin{defn}
%  \label{defn:relGequivhequiv}
%  Let $(X,A)$ and $(Y,B)$ be couples of $G$-spaces, i.e. $A\subset X$ and $B\subset Y$ are
%  inclusions of $G$-spaces. We say that $(X,A)$ and $(Y,B)$ are $G$-equivariantly homotopy equivalent if there are two $G$-equivariant
%  maps of couples $\alpha\colon (X,A)\to (Y,B)$ and $\beta\colon (Y,B)\to (X,A)$ such that $\beta\circ\alpha\colon X\to X$
%  is homotopic to $\Id_X$ through $G$-equivariant maps $(X,A)\to (X,A)$, and $\alpha\circ\beta$ is
%  homotopic to $\Id_Y$ through $G$-equivariant maps $(Y,B)\to (Y,B)$.
% \end{defn}
\begin{defn}
 \label{defn:XH}
 Let $H<G$ be a closed subgroup and let $X$ be a $G$-space. We denote by $X^H\subset X$ the subspace
 of points $x\in X$ which are fixed by the action of $H$, i.e. $H\leq \mathrm{Stab}_x\leq G$.
\end{defn}
Note that if $X$ and $Y$ are $G$-homotopy equivalent $G$-spaces, then $X^H$ and $Y^H$ are homotopy
equivalent spaces for all closed $H<G$.
The following definition slightly differs from the one usually given in the literature, e.g. in \cite{CG12}.
\begin{defn}
\label{defn:Gconnected}
A $G$-space $X$ is $G$-connected if $X^H$ is path-connected \emph{or empty} for all closed subgroups $H<G$.
\end{defn}
The $G$-equivariant Lusternik-Schnirelman category was first introduced in \cite{Marzantowicz}.
We give a slightly non-standard definition of it.
\begin{defn}
\label{defn:GLS}
 Let $X$ be a $G$-space; an open set $U\subset X$ is \emph{$G$-categorical} if $U$ is $G$-invariant
 and there is a $G$-equivariant homotopy $U\times[0,1]\to X$ starting from the inclusion $U\hookrightarrow X$
 and ending with a map with values in a \emph{finite union
 of $G$-orbits} of $X$. The $G$-equivariant
 Lusternik-Schnirelman category of $X$, denoted $\cat_G(X)$, is the minimum $k$ such that $X$ can be covered by $k+1$
 $G$-categorical open sets $U_0,\dots,U_k$.
\end{defn}
The previous definition differs from the usual one because we allow a $G$-categorical set $U\subset X$
to deform $G$-equivariantly into the union of more than a single $G$-orbit of $X$ (clearly, this is in principle possible
only if $U$ is disconnected).

Similarly as the classical Lusternik-Schnirelman category, $\cat_G$
is a $G$-homotopy invariant of $G$-spaces: if two $G$-spaces $X$ and $Y$ are $G$-equivariantly
homotopy equivalent, then $\cat_G(X)=\cat_G(Y)$. The argument of the proof is the usual one also with our
slightly different definition, and we repete it for completeness.
Let $\alpha\colon X\to Y$ be a $G$-equivariant homotopy equivalence with $G$-homotopy inverse $\beta\colon Y\to X$,
and suppose that $Y$ is covered by $G$-categorical sets $U_0,\dots,U_k$ (in the sense of Definition \ref{defn:GLS});
then we claim that $V_0=\alpha^{-1}(U_0),\dots,V_k=\alpha^{-1}(U_k)$ are $G$-categorical sets covering $X$. To see that
$V_i$ is $G$-categorical, note that the $G$-homotopy $\beta\circ\alpha\simeq_G\Id_X$
restricts to a $G$-homotopy from the inclusion $V_i\hookrightarrow X$ to the composition
$\beta\circ\alpha\colon V_i\to X$; moreover $\alpha\colon V_i\to Y$ has image in $U_i$, so we can $G$-equivariantly
deform $\alpha\colon V_i\to Y$ to a map $V_i\to Y$ with image contained in finitely many orbits. Composing
this latter homotopy with $\beta$ yields a $G$-homotopy from $\beta\circ\alpha\colon V_i\to X$
to a map $V_i\to X$ with image in finitely many orbits.

The following is \cite[Proposition 5.6]{CG12}.
\begin{prop}
 \label{prop:TCvscat}
 Let $X$ be a $G$-connected $G$-space; then $\TC_G(X)\leq\cat_G(X\times X)$, where $X\times X$ is endowed
 with the diagonal action of $G$.
\end{prop}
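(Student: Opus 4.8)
The plan is to convert a cover of $X\times X$ by $G$-categorical open sets into a $G$-motion planner on $X$ with the same number of sets. So suppose $\cat_G(X\times X)=k$ and fix $G$-categorical open sets $U_0,\dots,U_k$ covering $X\times X$, together with $G$-equivariant homotopies $H_i\colon U_i\times[0,1]\to X\times X$ such that $H_i(-,0)$ is the inclusion and $H_i(-,1)$ has image in a finite union of $G$-orbits $\mathcal{O}^{(i)}\subset X\times X$. It suffices to build, for each $i$, a $G$-equivariant section $s_i\colon U_i\to X^{[0,1]}$ of $p_X$; then $(U_i,s_i)_{i=0}^k$ is a $G$-motion planner and $\TC_G(X)\leq k$.

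The key step is to produce a $G$-equivariant section of $p_X$ over a single orbit, and here the $G$-connectedness hypothesis enters. Let $\mathcal{O}=G\cdot(p,q)$ be an orbit in $X\times X$ and put $H=\mathrm{Stab}_{(p,q)}=\mathrm{Stab}_p\cap\mathrm{Stab}_q$, a closed subgroup of $G$. Then $p,q\in X^H$, so $X^H$ is nonempty, hence path-connected by Definition \ref{defn:Gconnected}; choose a path $\sigma$ in $X^H$ from $p$ to $q$. Since $\sigma$ is fixed by $H$ (acting trivially on $[0,1]$), the rule $g\cdot(p,q)\mapsto g\cdot\sigma$ is a well-defined $G$-equivariant map $\mathcal{O}\to X^{[0,1]}$; using that $G$ is compact, so that $G/H\to\mathcal{O}$ is a homeomorphism, one checks it is continuous, and it is a section of $p_X|_{\mathcal{O}}$ because $\sigma$ runs from $p$ to $q$. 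As $G$ is compact the finitely many orbits making up $\mathcal{O}^{(i)}$ are closed, hence clopen in $\mathcal{O}^{(i)}$, so these sections assemble into a $G$-equivariant section $\tilde s^{(i)}\colon\mathcal{O}^{(i)}\to X^{[0,1]}$ of $p_X|_{\mathcal{O}^{(i)}}$.

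Finally I would concatenate. For $(x_1,x_2)\in U_i$ write $H_i(x_1,x_2,-)=(\alpha,\beta)$, a pair of paths in $X$ with $\alpha(0)=x_1$, $\beta(0)=x_2$ and $(\alpha(1),\beta(1))=:(p',q')\in\mathcal{O}^{(i)}$, and let $s_i(x_1,x_2)$ be the path obtained by running $\alpha$ from $x_1$ to $p'$, then $\tilde s^{(i)}(p',q')$ from $p'$ to $q'$, then $\beta$ backwards from $q'$ to $x_2$. This is a path from $x_1$ to $x_2$, it depends continuously on $(x_1,x_2)$, and it is $G$-equivariant because $H_i$ and $\tilde s^{(i)}$ are and because the $G$-action commutes with concatenation and reversal of paths; so $s_i$ is the desired section. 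The only genuinely delicate point is the one flagged above: the existence of a path between the two coordinates of a point of an orbit of $X\times X$ that is invariant under their common stabiliser, which is precisely what $G$-connectedness provides. Everything else is the routine bookkeeping of gluing over the clopen pieces of $\mathcal{O}^{(i)}$ and observing that the formula for $s_i$ makes sense even when $U_i$ is disconnected.
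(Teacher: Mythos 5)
Your argument is correct and is essentially the adaptation of the Colman--Grant proof that the paper invokes: the heart of the matter is, exactly as you identify, producing an $H$-fixed path from $p$ to $q$ when $(p,q)$ has stabiliser $H$, which is what the (empty-or-path-connected) form of $G$-connectedness supplies. You also correctly fill in the one extra step the paper's weakened definition of $G$-categorical requires, namely gluing sections over the several orbits via the observation that, $G$ being compact, each orbit is clopen in a finite disjoint union of orbits (this tacitly assumes $X$ Hausdorff, which is harmless here).
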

The proof given in \cite{CG12} works also using our slightly different definitions. The main difference
is that, in our setting, we have to remark that
for each closed subgroup $H<G$, either $(X\times X)^H=X^H\times X^H$ is empty, or it is path-connected
and intersects the diagonal of $X\times X$ in some point.

\section{Equivariant Morse theory}
Recall that if $M$ is a smooth manifold and $f\colon M\to\R$ is a proper and bounded below
Morse function, then $M$ is homotopy equivalent
to a CW complex $X$ whose $k$-cells are in one-to-one correspondence with
critical points of $f$ of index $k$, for all $k\geq 0$.

In this section we establish the analogue result for manifolds $M$ endowed with a smooth
action of a compact, connected Lie group $G$: if $M$ is endowed with a $G$-invariant Morse function,
then $M$ is $G$-equivariantly
homotopy equivalent to a $G$-cell complex $X$ whose $k$-cells are in one-to-one correspondence
with critical manifolds $N\subset M$ of index $k$.

We will consider equivariant Morse theory and equivariant cell complexes
in the general case, although later we will specialise
to manifolds $M$ with an action of a torus $\T^s$. The material of this
section is an adaptation of the results of \cite[§4]{Wasserman}.

Let $M$ be a smooth real manifold,
% such that there is a smooth compact
% manifold with boundary $\bar M$ whose interior is $M$.
let $G$ be a compact Lie group,
and assume $G$ acts smoothly (on left) on $M$.
% the action of $g\in G$ on $M$ is denoted $p\in M\mapsto g\cdot p\in M$.
% For simplicity we assume that the action of $G$ on $M$ extends to an action on $\bar M$.

% We fix a $G$-equivariant Riemannian metric on $M$: this can be done by
% first fixing a Riemannian metric on $M$ and then averaging this metric by
% the action of $G$. We do not insist that the Riemannian metric is complete, and in
% our applications it will be convenient to choose a non-complete metric.

Let $f\colon M\to\R$ be a smooth function, and assume that $f$ is \emph{proper} (i.e.
for all closed interval $[a,b]\subset\R$ the preimage $f^{-1}([a,b])\subset M$ is compact)
and $f$ is $G$-invariant.
% (i.e. for all $p\in M$ and $g\in G$ we have $f(p)=f(g\cdot p)$).

\begin{defn}
 \label{defn:criticalmanifold}
A submanifold $N\subset M$ is called \emph{critical} for $f$ if the following conditions hold:
\begin{itemize}
 \item $N$ is an orbit of the action of $G$ on $M$;
 \item every point $p\in N$ is critical for $f$, i.e. $df(p)=0$.
\end{itemize}
\end{defn}
If $N$ is critical for $f$ and $p\in N$, then the Hessian $H(f)_p$ is a well-defined
bilinear form on $T_p(M)$, with $T_p(N)\subset\ker H(f)_p$, i.e. $H(f)\left<v,w\right>=0$
for all $v\in T_p(N)$ and $w\in T_p(M)$. It follows that there is an induced bilinear
form $\bar H(f)_p$ on the vector space $T_p(M)/T_p(N)$.
\begin{defn}
 \label{defn:nondegeneratecriticalmanifold}
 A critical manifold $N\subset M$ for $f$ is \emph{non-degenerate} if $\bar H(f)_p$
 is a non-degenerate bilinear form on $T_p(M)/T_p(N)$. The \emph{index}
 of $N$, denoted $\ind(N)$, is the negativity index $\iota_-$ of the bilinear form
 $\bar H(f)_p$ on $T_p(M)/T_p(N)$.
\end{defn}
To see that the index of $N$ does not depend on the choice of $p\in N$,
note that the signature of $\bar H(f)_p$ has always the form $(\iota_0(p),\iota_-(p),\iota_+(p))$
with $\iota_0(p)=0$, for all $p\in N$; since the positivity and negativity indices $\iota_+$ and $\iota_-$
are lower semicontinuous functions $N\to\mathbb{N}$ and the sum $\iota_-+\iota_+$ is constantly
equal to $\dim M-\dim N$ on $N$, it follows that $\iota_+$ and $\iota_-$ are continuous,
hence constant functions on $N$. Thus Definition \ref{defn:nondegeneratecriticalmanifold} is well-posed.

\begin{defn}
 \label{defn:GMorse}
 A $G$-invariant function $f\colon M\to \R$ is a $G$-invariant Morse function if
 it is bounded below, is proper and the critical locus of $f$ is the union of finitely
 many disjoint submanifolds $N_1,\dots,N_r\subset M$.
\end{defn}
Lemma 4.8 in \cite{Wasserman} is stated only for closed manifolds, but it
can be generalised to the following Lemma with the same proof.
\begin{lem}
 \label{lem:density}
 Let $\tilde f\colon M\to \R$ be a smooth, proper, $G$-invariant function, and assume that $\tilde f$ has no critical points outside a compact set $K\subset M$. Let $U$ be a $G$-invariant, relatively compact
 open neighbourhood of $K$ in $M$. Then for all $l\geq 1$ there is a $G$-invariant Morse
 function $f\colon M\to \R$
 such that $\tilde f-f$ vanishes outside $U$, and $\tilde f$ is arbitrarily close to $f$
 in the $C^l$ norm. 
\end{lem}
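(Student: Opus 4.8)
The plan is to confine the perturbation to a fixed relatively compact region around $K$ and then to invoke the closed--manifold case \cite[Lemma~4.8]{Wasserman}, whose proof is local and therefore indifferent to the non-compactness of $M$. Two elementary observations underlie the reduction: in the $C^1$ topology, \emph{having no critical point on a fixed compact set} is an open condition, and in the $C^2$ topology, \emph{having only non-degenerate critical manifolds on a fixed compact set} is an open condition. Since we may perturb arbitrarily finely in the $C^l$ norm for any $l\geq 1$, and in particular for $l\geq 2$, both are available to us.

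First I would arrange the set-up. As $\tilde f$ is $G$-invariant, $\crit(\tilde f)$ is a $G$-invariant subset of $K$; replacing $K$ by $G\cdot K$ — still compact, as $G$ is, and still inside $U$, as $U$ is $G$-invariant — we may assume $K$ is $G$-invariant. Averaging ordinary bump functions over the Haar measure of $G$ yields $G$-invariant smooth cutoffs, which is all we need to localize constructions $G$-equivariantly; in particular we may choose $G$-invariant open sets $K\subseteq W_0\subseteq\bar W_0\subseteq W_1\subseteq\bar W_1\subseteq U$ with $\bar W_1$ compact. Since $\crit(\tilde f)\subseteq W_0$ while $\bar W_1\setminus W_0$ is compact, the number $c:=\inf_{\bar W_1\setminus W_0}\abs{d\tilde f}$ is strictly positive; hence any perturbation of $\tilde f$ that is supported in $W_1$ and $C^1$-smaller than $c$ has all of its critical points inside $W_0$ (outside $W_1$ it agrees with $\tilde f$, which has none there).

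The crux — and the step I expect to be the main obstacle — is a purely local statement: given a $G$-invariant slice neighbourhood $\Omega$ of an orbit and a relatively compact $G$-invariant open $O\subseteq\Omega$, there is a $G$-invariant perturbation, arbitrarily $C^l$-small and supported in $\Omega$, after which $f$ has only non-degenerate critical manifolds inside $O$. I would prove this exactly as Wasserman does in the compact case: embed a relatively compact piece of $\Omega$ $G$-equivariantly into an orthogonal $G$-representation $V$ (Mostow--Palais), form a finite-dimensional family of $G$-invariant perturbations built from this embedding — for instance those of the form $x\mapsto\langle A\iota(x),\iota(x)\rangle$ with $A$ a $G$-equivariant symmetric endomorphism of $V$ — multiply each by a $G$-invariant cutoff supported in $\Omega$, and run an equivariant parametric-transversality (Sard) argument to deduce that for almost every small value of the parameter the resulting function is non-degenerate over $O$. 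This is the only place where equivariance forces genuine work beyond the classical theory; crucially, nothing in the argument uses compactness of $M$, which is precisely why the compact-case proof transfers to our situation without change.

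Granting this, I would produce $f$ by a finite induction inside $\bar W_1$. Cover the compact set $\bar W_1$ by finitely many $G$-invariant slice neighbourhoods $\Omega_1,\dots,\Omega_q$ of orbits (the slice theorem applies, the action of the compact group $G$ being proper), together with a shrinking $O_1,\dots,O_q$ such that $\bar O_j\subseteq\Omega_j$ and $\bar W_0\subseteq\bigcup_j O_j$. Starting from $\tilde f$, at the $j$-th step apply the local step inside $\Omega_j\cap W_1$ to make all critical manifolds in $O_j$ non-degenerate, choosing the perturbation small enough to preserve — by openness — the non-degeneracy already gained over $O_1,\dots,O_{j-1}$, and small enough that, by the size condition of the second paragraph, no critical point is created outside $W_0$. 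After $q$ steps one has $f$: here $f-\tilde f$ is a $C^l$-small function supported in $W_1\subseteq U$, $\crit(f)\subseteq W_0\subseteq\bigcup_j O_j$, and every critical manifold of $f$ is non-degenerate. By the equivariant Morse lemma each such critical manifold is isolated in $\crit(f)$, so, $\bar W_0$ being compact, there are finitely many of them; they are pairwise disjoint and each is an embedded submanifold, $G$ being compact. Finally $f$ coincides with $\tilde f$ off the relatively compact set $U$, hence is proper and is bounded below whenever $\tilde f$ is (as it is in the application); so $f$ is a $G$-invariant Morse function in the sense of Definition~\ref{defn:GMorse}, arbitrarily $C^l$-close to $\tilde f$ and equal to it off $U$, as required.
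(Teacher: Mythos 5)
Your proof is correct and follows the paper's intended approach: the paper's own justification for this lemma is a single sentence asserting that Wasserman's Lemma~4.8, stated there for closed manifolds, ``can be generalised \dots with the same proof,'' and your write-up supplies exactly that generalization --- localizing the Mostow--Palais embedding plus parametric Sard argument to $G$-invariant slice neighbourhoods with invariant cutoffs, exploiting the $C^1$/$C^2$-openness of ``no critical points'' and ``only non-degenerate critical orbits'' over a fixed compactum, and patching by a finite induction inside a relatively compact invariant region. The one thing worth flagging is your own correct observation at the end: Definition~\ref{defn:GMorse} requires a $G$-invariant Morse function to be bounded below, which the hypotheses on $\tilde f$ do not literally guarantee (e.g.\ $\tilde f(x)=x$ on $M=\R$ is proper with no critical points), so the lemma as stated is mildly imprecise, though harmless since $\tilde f\geq 0$ in the application of Section~\ref{sec:goodMorse}.
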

 We will need the case $l=2$. To state the main result about $G$-invariant Morse functions we need
 the following definition.
 
\begin{defn}
 \label{defn:Gcellcomplex}
 Let $H<G$ be a closed subgroup, let $R$ be an orthogonal representation of $H$ of real dimension $k$,
 and denote by $D(R)\subset R$ the closed unit ball, and by $S(R)=\partial D(R)\subset D(R)$ the unit sphere. We say that the $G$-space $Y$ is obtained from the $G$-space $X$ by a cell attachment of type $(H,R)$
 if there is a pushout diagram of $G$-spaces
 \[
  \begin{tikzcd}
   G\times_H S(R) \ar[r,"\phi"]\ar[d,"\subset"] & X \ar[d] \\
   G\times_H D(R) \ar[r] & Y
  \end{tikzcd}
 \]
 for some $G$-equivariant map $\phi\colon G\times_H S(R)\to X$. We denote by $\psi\colon G\times_H D(R)\to Y$ the induced map, which is called a \emph{characteristic map} of a cell, whereas $\phi$ is referred
 to as an \emph{attaching map}. We represent a cell by its characteristic map $\psi$; the \emph{index}
 of $\psi$ is defined as the real dimension of $R$.
 
 A \emph{finite} $G$-cell complex $X$ is a space obtained starting from $\emptyset$ and applying finitely
 many cell attachments as above. The \emph{index} of a $G$-cell complex $X$, denoted by $\ind(X)$, is the maximal index of its cells.
\end{defn}
Clearly the first cell that we attach to $\emptyset$ while constructing a $G$-cell complex $X$ must have
index $0$. Note however that we do not require in Definition \ref{defn:Gcellcomplex} that the cells are attached
in an order for which the index is weakly increasing; in particular, if $X$ is a finite $G$-cell complex
and $H$, $R$ and $\phi$ are as in Definition \ref{defn:Gcellcomplex}, it may happen that
$\phi$ hits points belonging to cells with index greater or equal than the one we are currently attaching.
It is not true in general that we can homotope $\phi$ in a $G$-equivariant way to a new attaching
map $\phi'$ hitting only points of $X$ belonging to cells of index lower than $\dim R$: in other words,
there is no $G$-equivariant cellular approximation theorem for $G$-cell complexes as in Definition \ref{defn:Gcellcomplex}.

The index $\ind(X)$ coincides with the geometric dimension of $X$ (seen as a space homeomorphic to
a plain CW complex) only when all $G$-cells of $X$ are defined using subgroups $H<G$ of finite index,
i.e. codimension 0. Our case of interest will be quite the opposite, namely the one in which every
$G$-cell is defined using a subgroup $H<G$ of small dimension.

Corollary 4.11 in \cite{Wasserman} is stated for compact manifolds, but it
can be generalised, with the same proof, to the following theorem.
\begin{thm}
 \label{thm:Wasserman}
 Let $M$ be a smooth manifold with a smooth action of $G$, and let $f\colon M\to \R$ be a $G$-invariant
 Morse function. Then $M$ is $G$-equivariantly homotopy equivalent to a finite $G$-cell complex
 $X$, whose cells $\psi_1,\dots,\psi_r$ are in one-to-one correspondence with the critical manifolds
 $N_1,\dots,N_r$ of $f$, such that $\ind(\psi_i)=\ind(N_i)$ for all $1\leq i\leq r$.
\end{thm}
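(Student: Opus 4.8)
The strategy is to run the classical Morse-theoretic analysis of sublevel sets, carried out $G$-equivariantly, noting that properness and boundedness below of $f$ play exactly the role that compactness of $M$ plays in \cite[§4]{Wasserman}. Write $f\geq c_0$. Since $f$ is proper, every sublevel set $M^a:=f^{-1}((-\infty,a])=f^{-1}([c_0,a])$ is compact, and the critical values of $f$ form a finite set $c_1<\dots<c_N$, namely the values taken by $f$ on the critical manifolds $N_1,\dots,N_r$. Averaging an arbitrary Riemannian metric on $M$ over the compact group $G$ produces a $G$-invariant metric; then $\nabla f$ is a $G$-equivariant vector field, and its flow — suitably renormalised away from the critical locus so that $f$ decreases at unit rate — is $G$-equivariant and, on the compact pieces that occur below, defined for as long as we need it.

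Next I would carry out the two standard steps by induction on the critical values. \emph{Between} consecutive critical values $c_j<c_{j+1}$, the $G$-equivariant flow provides a $G$-equivariant deformation retraction of $M^{c_{j+1}-\epsilon}$ onto $M^{c_j+\epsilon}$, using compactness of $f^{-1}([c_j+\epsilon,c_{j+1}-\epsilon])$ and the absence of critical points there. \emph{Across} a critical value $c_j$, for each critical manifold $N_i=G\cdot p\cong G/H_i$ (with $H_i=\mathrm{Stab}_p$) lying over $c_j$, the equivariant Morse lemma of \cite[§4]{Wasserman} gives a $G$-invariant tubular neighbourhood of $N_i$ on which $f=c_j+|\xi^+|^2-|\xi^-|^2$, where $\xi^\pm$ are fibre coordinates in the fibrewise positive and negative subbundles of the normal bundle of $N_i$ determined by $\bar H(f)$ and the $G$-invariant metric. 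The negative subbundle is a $G$-vector bundle over $G/H_i$, hence isomorphic to $G\times_{H_i}R_i$ where $R_i$ is the orthogonal $H_i$-representation on its fibre over $p$; by definition its real dimension equals the negativity index of $\bar H(f)_p$, that is $\ind(N_i)$. The usual handle argument then identifies $M^{c_j+\epsilon}$, up to $G$-homotopy equivalence, with the space obtained from $M^{c_j-\epsilon}$ by attaching, for each such $N_i$, a cell of type $(H_i,R_i)$ along the sphere bundle $G\times_{H_i}S(R_i)$, the attaching map being read off from the $G$-equivariant flow — precisely a cell attachment in the sense of Definition \ref{defn:Gcellcomplex}, of index $\ind(N_i)$. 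When several critical manifolds share the value $c_j$, one performs these attachments simultaneously inside disjoint $G$-invariant neighbourhoods, which exist because the $N_i$ are compact and pairwise disjoint.

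For the base of the induction, $M^{c_1+\epsilon}$ $G$-equivariantly deformation retracts onto the disjoint union of the index-$0$ critical manifolds lying over $c_1$, i.e. onto $\bigsqcup_i G/H_i$; since $G\times_{H_i}D(0)=G/H_i$ and $G\times_{H_i}S(0)=\emptyset$, this is the $G$-cell complex obtained from $\emptyset$ by attaching one $0$-cell of type $(H_i,0)$ for each. Finally, $M$ itself is $G$-equivariantly homotopy equivalent to $M^a$ for any $a>c_N$: a point with $f$-value $b>a$ flows down through the compact, critical-point-free set $f^{-1}([a,b])$, so the renormalised flow carries it into $M^a$ in finite time and yields a $G$-equivariant deformation retraction of $M$ onto $M^a$. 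Composing the $G$-homotopy equivalences produced along the way exhibits $M$ as $G$-equivariantly homotopy equivalent to a finite $G$-cell complex whose cells $\psi_1,\dots,\psi_r$ biject with $N_1,\dots,N_r$ and satisfy $\ind(\psi_i)=\ind(N_i)$.

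The genuinely substantial point is the equivariant normal form near a critical manifold together with the identification of its negative normal bundle with an induced bundle $G\times_{H_i}R_i$: this is where the $G$-invariance of the metric and the splitting into $\mathrm{Stab}_p$-representations over a chosen point $p\in N_i$ are used, and it is exactly the content of \cite[§4]{Wasserman}. Once it is in hand, every remaining step is formally identical to the non-equivariant construction, applied throughout to the compact sublevel sets $M^a$; this is the precise sense in which Corollary 4.11 of \cite{Wasserman} generalises ``with the same proof''.
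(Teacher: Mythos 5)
Your proposal is correct and follows the same route the paper intends: the paper's entire ``proof'' of Theorem~\ref{thm:Wasserman} is the assertion that Corollary~4.11 of Wasserman generalises from closed manifolds to manifolds with a proper, bounded-below $G$-invariant Morse function ``with the same proof,'' and your write-up supplies precisely the verification that this is so. The essential points you correctly isolate are (i) properness plus boundedness below makes every sublevel set $M^a=f^{-1}([c_0,a])$ compact, so the renormalised $G$-equivariant gradient flow behaves as in the compact case on each stage of the induction and on the final retraction of $M$ onto $M^a$ for $a$ above the top critical value; (ii) the equivariant Morse lemma identifies the negative normal bundle of each critical orbit $N_i\cong G/H_i$ with an induced bundle $G\times_{H_i}R_i$ of rank $\ind(N_i)$, giving a cell attachment of type $(H_i,R_i)$ exactly as in Definition~\ref{defn:Gcellcomplex}; and (iii) finiteness of the number of critical manifolds gives finitely many critical values and hence a finite $G$-cell complex. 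This is the same argument, just spelled out rather than cited.
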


\section{Equivariant CW complexes}
% We fix a compact, connected Lie group $G$ also throughout this section.
The results of this section are probably standard, but I could not find a source stating
them explicitly; nevertheless they are straightforward consequences of results that can be
found in \cite[II]{tomDieck} and \cite{Illman}, which are the main references for this section.

Our aim is to replace, up to $G$-equivariant homotopy equivalence, a $G$-cell complex by a $G$-CW
complex, which is defined as follows.
\begin{defn}
 \label{defn:GCW}
A finite $G$-pre-CW complex $X$ is a finite $G$-cell complex $X$ obtained by attaching cells of type $(H,\epsilon^{k})$, where $\epsilon^k$ is the trivial $k$-dimensional representation of some closed subgroup $H<G$, for some $k\geq 0$.

A finite $G$-CW complex $X$ is a particular type of finite $G$-pre-CW-complex: it is a $G$-space
$X$ filtered by $G$-invariant subspaces $\emptyset=X_{-1}\subset X_0\subset X_1\subset\dots\subset X_\nu=X$, for some $\nu\geq 0$,
such that, for all $0\leq k\leq \nu$, $X_k$ is obtained from $X_{k-1}$ by \emph{simultaneously} attaching
finitely many $G$-cells of type $\epsilon^k$. More precisely, for all $0\leq k\leq \nu$ there are:
\begin{itemize}
 \item a finite set $I_k$;
 \item a closed subgroup $H_j<G$ for all $j\in I_k$;
%  \item an $i$-dimensional representation $R_j$ of $H_j$ for all $j\in I_i$;
 \item a $G$-equivariant map $\phi_j\colon G\times_{H_j} S(\epsilon^k)\to X_{k-1}$ for all $j\in I_k$,
\end{itemize}
such that $X_k$ is obtained as pushout of the following diagram
\[
 \begin{tikzcd}
  \coprod_{j\in I_k} G\times_{H_j}S(\epsilon^k) \ar[d,"\subset"]\ar[r,"\coprod \phi_j"] & X_{k-1}\ar[d]\\
  \coprod_{j\in I_k} G\times_{H_j}D(\epsilon^k) \ar[r] & X_k.
 \end{tikzcd}
\]
\end{defn}

The following proposition is a direct consequence of \cite[Theorem II.2.1]{tomDieck}.
\begin{prop}
 \label{prop:cellularapproximation}
 Every $G$-pre-CW complex $X$ is $G$-equivariantly homotopy equivalent to a $G$-CW complex
 $Y$ with the same number of cells in every index.
\end{prop}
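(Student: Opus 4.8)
I would argue by induction on the number of cells of the $G$-pre-CW complex $X$, converting it into a $G$-CW complex one cell at a time. The two tools needed are equivariant cellular approximation \cite[Theorem II.2.1]{tomDieck}, which lets one push an attaching map into a lower skeleton, and the homotopy invariance of pushouts along $G$-cofibrations: if $A\hookrightarrow C$ is a $G$-cofibration, $\phi_0\simeq_G\phi_1\colon A\to B$, and $\alpha\colon B\to B'$ is a $G$-homotopy equivalence, then $B\cup_{\phi_0}C\simeq_G B\cup_{\phi_1}C$ and $B\cup_{\phi_0}C\simeq_G B'\cup_{\alpha\circ\phi_0}C$. These are applied with $A=G\times_H S(\epsilon^k)$ and $C=G\times_H D(\epsilon^k)$, whose inclusion is a $G$-cofibration, being $G\times_H(-)$ applied to the $H$-cofibration $S(\epsilon^k)\hookrightarrow D(\epsilon^k)$.

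If $X$ has no cells there is nothing to prove. Otherwise let $X'$ be obtained from $X$ by dropping the last cell, which is a cell $\psi$ of type $(H,\epsilon^k)$ attached along some $\phi\colon G\times_H S(\epsilon^k)\to X'$; then $X'$ is again a finite $G$-pre-CW complex, so by induction there is a $G$-homotopy equivalence $\alpha\colon X'\to Y'$ with $Y'$ a finite $G$-CW complex with the same number of cells as $X'$ in each index. Write $\emptyset=Y'_{-1}\subset Y'_0\subset\dots$ for the skeletal filtration, with $Y'_j=Y'$ once $j\geq\dim Y'$. Since $G\times_H S(\epsilon^k)\cong(G/H)\times S^{k-1}$ is a $(k-1)$-dimensional $G$-CW complex (apply $G\times_H(-)$ to a CW structure on $S^{k-1}$), cellular approximation makes $\alpha\circ\phi$ $G$-homotopic to a cellular $G$-map $\phi'\colon G\times_H S(\epsilon^k)\to Y'$, which therefore lands in $Y'_{k-1}$. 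The two tools then give
\begin{align*}
 X=X'\cup_\phi\pa{G\times_H D(\epsilon^k)}
 &\simeq_G Y'\cup_{\alpha\circ\phi}\pa{G\times_H D(\epsilon^k)}\\
 &\simeq_G Y'\cup_{\phi'}\pa{G\times_H D(\epsilon^k)}=:Y.
\end{align*}

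It then remains to exhibit $Y$ as a $G$-CW complex whose cells are those of $Y'$ together with one extra cell of type $(H,\epsilon^k)$ --- that is, exactly the cells of $X$. For this I would set $Y_j:=Y'_j$ for $j<k$ and $Y_j:=Y'_j\cup_{\phi'}\pa{G\times_H D(\epsilon^k)}$ for $j\geq k$, which is legitimate since $\phi'$ has image in $Y'_{k-1}\subset Y'_j$. Then $Y_k$ is obtained from $Y_{k-1}=Y'_{k-1}$ by simultaneously attaching the $k$-cells of $Y'$ and the new cell along $\phi'$, while for $j>k$ a rearrangement of the iterated pushout exhibits $Y_j$ as obtained from $Y_{j-1}$ by simultaneously attaching the $j$-cells of $Y'$, along their attaching maps post-composed with $Y'_{j-1}\hookrightarrow Y_{j-1}$. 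Hence $\emptyset=Y_{-1}\subset Y_0\subset\dots\subset Y_{\max(k,\dim Y')}=Y$ is a finite $G$-CW structure on $Y$ with the asserted cells, and $Y\simeq_G X$ by the displayed chain.

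The main obstacle is precisely this last bookkeeping: checking that re-attaching the pushed-down cell produces a bona fide $G$-CW complex. It reduces to the elementary fact that attaching a cell along a map into $Y'_{k-1}$ commutes with the successive cell attachments building $Y'_k,Y'_{k+1},\dots$ out of $Y'_{k-1}$, i.e. to a rearrangement of an iterated pushout. Everything else is routine: the homotopy invariance of pushouts along $G$-cofibrations, and the observation that $G\times_H S(\epsilon^k)$ is a $(k-1)$-dimensional $G$-CW complex, so that the cited cellular approximation theorem applies in the form used above.
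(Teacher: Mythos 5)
Your proof is correct and follows the route the paper intends: the paper disposes of this proposition in one line by citing the equivariant cellular approximation theorem \cite[Theorem II.2.1]{tomDieck}, and your argument is precisely the standard induction that turns that citation into a proof, namely pushing each attaching map down to the lower skeleton by cellular approximation and invoking $G$-homotopy invariance of pushouts along the $G$-cofibration $G\times_H S(\epsilon^k)\hookrightarrow G\times_H D(\epsilon^k)$, followed by the pushout-rearrangement bookkeeping to exhibit the result as a bona fide $G$-CW complex. You have simply supplied the details the paper leaves implicit.
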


Thus our aim reduces to replacing a $G$-cell complex by a $G$-pre-CW complex up to $G$-equivariant
homotopy equivalence. Before proceeding, we state the following theorem, which is a weak form of
\cite[Theorem 7.1]{Illman}.
\begin{thm}
 \label{thm:decomposition}
 Let $W$ be a smooth, compact manifold of dimension $k$ with a smooth action of $G$; then $W$ is homeomorphic to a $G$-CW complex with $G$-cells of index $\leq k$.
\end{thm}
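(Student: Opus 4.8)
The plan is to obtain this statement as a weak form of Illman's equivariant triangulation theorem \cite[Theorem 7.1]{Illman}, with only a dimension count to be added. Applied to the compact smooth $G$-manifold $W$, that theorem equips $W$ with a finite $G$-CW structure in the sense of Definition \ref{defn:GCW} (finite, because $W$ is compact), whose cells are organised by a $G$-equivariant triangulation of $W$: a $G$-cell of index $n$ is attached via a characteristic map $\psi\colon G\times_H D(\epsilon^n)\to W$ with $H<G$ closed, and it corresponds to a ``$G$-orbit of $n$-simplices'' of the triangulation — equivalently, $\psi$ restricts to an embedding of the open cell $G\times_H\mathrm{int}\,D(\epsilon^n)$ onto a locally closed smooth submanifold of $W$.

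The only thing left to verify for our statement is then that every such index $n$ satisfies $n\le k$. This is immediate: the image submanifold above has dimension $n+\dim(G/H)$ and is contained in the $k$-dimensional manifold $W$, so $n\le n+\dim(G/H)\le k$. (When $G$ is finite this is just the observation that a triangulation of a $k$-manifold uses only simplices of dimension $\le k$.) Reindexing the cells by their index and attaching one skeleton at a time reproduces exactly the filtration of Definition \ref{defn:GCW}, now with $\nu\le k$.

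The single genuinely nontrivial ingredient is the existence of the equivariant triangulation itself, which I would cite from \cite{Illman} rather than reprove; everything else is bookkeeping against Definition \ref{defn:GCW}. I would also point out why the Morse-theoretic machinery of Section 3 does not suffice on its own here: a $G$-invariant Morse function on $W$ exists by Lemma \ref{lem:density}, and by Theorem \ref{thm:Wasserman} it makes $W$ $G$-equivariantly homotopy equivalent to a finite $G$-cell complex of index $\le k$; but this is only a homotopy equivalence, and its cells are of the general type $(H,R)$ rather than the trivial-representation type $(H,\epsilon^n)$ demanded of a $G$-CW complex. Closing precisely that gap is the role Illman's theorem plays in this section.
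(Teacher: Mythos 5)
Your proposal matches the paper's treatment of this statement: the paper also states it without proof, simply as a weak form of Illman's equivariant triangulation theorem \cite[Theorem 7.1]{Illman}. The dimension count you supply — the open $G$-cell of index $n$ is topologically embedded as a copy of $G/H\times\mathring{D}(\epsilon^n)$ inside the $k$-manifold $W$, forcing $n\le n+\dim(G/H)\le k$ — is correct and records the small piece of bookkeeping the paper leaves implicit (note that topological embedding plus invariance of domain already suffices; you do not need the image to be a smooth submanifold).
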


Suppose now that we are given a $G$-pre-CW complex $X$, a closed subgroup $H<G$, a possibly non-trivial
representation $R$ of $H$ of some dimension $k$ and a $G$-equivariant map $\phi\colon G\times_H S(R)\to X$.
Then $G\times_H S(R)$ is a compact manifold
(it is the total space of a bundle over $G/H$ with fibre $S(R)$), so by Theorem \ref{thm:decomposition}
there is a decomposition of $G\times_H S(R)$ as a $G$-CW complex.

We can extend this to a structure of $G$-CW-complex 
on $G\times_H D(R)$, by using a cell of index $0$ to represent
the zero section of the disc bundle $G\times_H D(R)\to G/H$,
% (clearly this cell is constructed
% using $H$ as subgroup of $G$)
and then by coning off the $G$-CW structure on $G\times_H S(R)$
inside $G\times_H D(R)$: note that the cells filling the cone have index precisely 1 more than
the corresponding ones in $G\times_H S(R)$.
We thus obtain a structure of $G$-pre-CW complex on
the pushout of $G\times_H D(R)$ and $X$ along $\phi$.

Repeating this argument for all cells in a $G$-cell complex, we obtain a decomposition
of any $G$-cell complex $X$ as a $G$-pre-CW complex: the total number of cells will increase,
but the index of $X$ is the same when $X$ is considered as a $G$-cell complex (before the decomposition)
or as a $G$-pre-CW complex (after the decomposition).

Putting together
this argument, Theorem \ref{thm:Wasserman} and Proposition \ref{prop:cellularapproximation} we obtain
the following theorem, for which I could not find a direct reference in the literature.
\begin{thm}
 \label{thm:GMorsefinal}
 Let $M$ be a smooth manifold with a smooth action of $G$, and let $f\colon M\to\R$ be a $G$-invariant Morse function whose critical manifolds have index $\leq k$.
 Then $M$ is $G$-equivariantly homotopy equivalent to a finite $G$-CW complex $X$
 with $\ind(X)\leq k$.
\end{thm}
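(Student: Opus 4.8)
The plan is to chain together the three structural results already in hand: Theorem~\ref{thm:Wasserman} turns a $G$-invariant Morse function into a $G$-equivariant homotopy equivalence with a finite $G$-cell complex $X$; the intermediate discussion (via Theorem~\ref{thm:decomposition}) refines $X$ into a $G$-pre-CW complex $X'$ without changing its index; and Proposition~\ref{prop:cellularapproximation} replaces $X'$ by an honest $G$-CW complex $Y$ with the same number of cells in each index. The only genuinely new content is checking that the index bound $\ind(N_i)\le k$ is preserved at every stage, so the whole proof is essentially an accounting argument. I would state at the outset that $G$ is, as fixed in Section~2, a compact connected Lie group, and that $f$ is a $G$-invariant Morse function in the sense of Definition~\ref{defn:GMorse} with all critical manifolds $N_1,\dots,N_r$ of index $\le k$.

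First I would invoke Theorem~\ref{thm:Wasserman} to obtain a finite $G$-cell complex $X$, $G$-equivariantly homotopy equivalent to $M$, with cells $\psi_1,\dots,\psi_r$ satisfying $\ind(\psi_i)=\ind(N_i)\le k$; hence $\ind(X)\le k$ as a $G$-cell complex. Next I would apply the decomposition procedure described just before the statement: for each cell of type $(H,R)$ with $\dim R=\ell\le k$, the sphere bundle $G\times_H S(R)$ is a compact $G$-manifold of dimension $\ell-1$, which by Theorem~\ref{thm:decomposition} is a $G$-CW complex with cells of index $\le \ell-1$; coning this off inside $G\times_H D(R)$ (together with one index-$0$ cell for the zero section) produces a $G$-pre-CW structure on $G\times_H D(R)$ whose cells have index $\le \ell \le k$. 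Performing this cell by cell turns $X$ into a $G$-pre-CW complex $X'$, still $G$-homotopy equivalent to $M$, with $\ind(X')\le k$ — the point being that although the number of cells grows, coning a cell of index $\le \ell-1$ yields a cell of index $\le \ell$, so no index exceeds the original $\ell\le k$. Finally I would apply Proposition~\ref{prop:cellularapproximation} to replace $X'$ by a finite $G$-CW complex $Y$, $G$-homotopy equivalent to $X'$ (hence to $M$), with the same number of cells in every index, so in particular $\ind(Y)\le k$. Combining the $G$-homotopy equivalences $M\simeq_G X\simeq_G X'\simeq_G Y$ completes the proof.

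The part requiring the most care is the index bookkeeping in the decomposition step, and I would make that explicit rather than wave at it: one must verify that the $G$-CW structure on the sphere bundle really does have cells of index $\le \dim R - 1$ (this is exactly the content of Theorem~\ref{thm:decomposition}, applied to the $(\dim R-1)$-dimensional manifold $G\times_H S(R)$), and that the cone construction raises each such index by exactly $1$, so the filled disc bundle has cells of index $\le \dim R \le k$ — including the separate index-$0$ cell for the zero section, which causes no trouble since $0\le k$ whenever there is at least one critical manifold, and the statement is vacuous otherwise. I would also note that forming the pushout of $X'_{\text{old}}$ with the newly-structured $G\times_H D(R)$ along $\phi$ genuinely yields a $G$-pre-CW complex: the attaching map $\phi$ restricted to each cell of $G\times_H S(R)$ lands in $X'_{\text{old}}$, which is already a $G$-pre-CW complex, so the new cells are simply appended to the cell structure. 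No $G$-equivariant cellular approximation is needed here — indeed, as remarked after Definition~\ref{defn:Gcellcomplex}, no such theorem holds — because we are only composing already-established equivalences and never need to rigidify an attaching map. Once these routine verifications are in place, the theorem follows immediately by transitivity of $G$-equivariant homotopy equivalence.
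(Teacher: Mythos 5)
Your overall plan coincides with the paper's: apply Theorem~\ref{thm:Wasserman} to get a finite $G$-cell complex with index $\leq k$, refine it into a $G$-pre-CW complex by decomposing each sphere bundle $G\times_H S(R)$ and coning, then apply Proposition~\ref{prop:cellularapproximation} to obtain an honest $G$-CW complex, all while tracking the index. That is the right skeleton and is exactly what the paper does.

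However, there is a genuine error in the step you single out as needing ``the most care.'' You assert that $G\times_H S(R)$ is a compact $G$-manifold of dimension $\ell-1$ (where $\ell=\dim R$), and you repeat this in the last paragraph (``the $(\dim R-1)$-dimensional manifold $G\times_H S(R)$''). This is false: $G\times_H S(R)$ is the total space of a bundle over $G/H$ with fibre $S(R)$, so its dimension is $\dim(G/H)+\ell-1$. Theorem~\ref{thm:decomposition}, taken at face value, therefore only gives a $G$-CW structure on $G\times_H S(R)$ with cells of index $\leq\dim(G/H)+\ell-1$, and coning would produce cells of index up to $\dim(G/H)+\ell$, which exceeds $\ell$ whenever $H$ has positive codimension in $G$ --- precisely the case of interest in this paper, where $G=\T^s$ and $H$ is a finite subgroup. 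As written, your index accounting breaks down.

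The conclusion you want (cells of index $\leq\ell-1$ on the sphere bundle) is nonetheless correct, but for a different reason. Every point of $G\times_H S(R)$ has stabiliser conjugate into $H$, so every $G$-orbit has dimension $\geq\dim(G/H)$. In any $G$-CW decomposition, a cell $G\times_K D(\epsilon^j)$ contributes an open stratum $G/K\times\mathring{D}(\epsilon^j)$ of dimension $\dim(G/K)+j\leq\dim\bigl(G\times_H S(R)\bigr)$, and since $\dim(G/K)\geq\dim(G/H)$ this forces $j\leq\ell-1$. The paper glosses over this point --- it merely asserts that the index is unchanged by the decomposition, without quantifying why --- so your instinct to make it explicit was sound, but the specific dimension count you supply must be replaced by this orbit-dimension argument; it is not ``exactly the content of Theorem~\ref{thm:decomposition}.''
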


The index of a $G$-CW complex $X$ is an upperbound for the $G$-equivariant Lusternik-Schnirelman category
$\cat_G(X)$, as we see in the following proposition.
\begin{prop}
 \label{prop:CWLS}
 Let $X$ be a finite $G$-CW complex of index $\nu$; then $\cat_G(X)\leq \nu$.
\end{prop}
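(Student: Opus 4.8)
The plan is to argue by induction on the index $\nu$ of $X$, imitating the classical proof that the Lusternik-Schnirelman category of a CW complex does not exceed its dimension, while using in an essential way that Definition~\ref{defn:GLS} allows a $G$-categorical set to be deformed into a \emph{finite} union of $G$-orbits. The base case $\nu=0$ is immediate: a finite $G$-CW complex of index $0$ is a finite disjoint union $\coprod_{j\in I_0}G/H_j$ of $G$-orbits, so $X$ is itself a single $G$-categorical open set (with the constant homotopy as witness), and $\cat_G(X)\leq 0$.

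For the inductive step I would write $A:=X_{\nu-1}$, a finite $G$-CW complex of index $\leq\nu-1$, so that by the inductive hypothesis $A$ admits a cover by $\nu$ open, $G$-invariant, $G$-categorical subsets $W_0,\dots,W_{\nu-1}$. The first task is to produce a $G$-invariant open neighbourhood $\widetilde U$ of $A$ in $X$ that $G$-equivariantly deformation retracts onto $A$; write $r\colon\widetilde U\to A$ for the retraction. Writing $X=X_\nu$ as the pushout of $A$ with $\coprod_{j\in I_\nu}G\times_{H_j}D(\epsilon^\nu)$ along $\coprod_{j\in I_\nu}\phi_j$, one takes
\[
 \widetilde U:=A\cup\coprod_{j\in I_\nu}G\times_{H_j}\set{x\in D(\epsilon^\nu):\abs{x}>\tfrac12}.
\]
Since each $H_j$ acts trivially on $\epsilon^\nu$, this set is $G$-invariant, its preimage in $A\sqcup\coprod_{j}G\times_{H_j}D(\epsilon^\nu)$ is open, hence $\widetilde U$ is open in $X$; and pushing the annular regions radially outward onto the attaching spheres provides the $G$-equivariant deformation of $\widetilde U$ onto $A$. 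On the other hand $X_{\nu-1}$ is closed in $X$, so $V:=X\setminus X_{\nu-1}$ is open in $X$; as a $G$-space $V$ is the disjoint union, over $j\in I_\nu$, of the open cells $G\times_{H_j}\set{x\in D(\epsilon^\nu):\abs{x}<1}$, and since $H_j$ acts trivially on $\epsilon^\nu$ this equals $\coprod_{j\in I_\nu}(G/H_j)\times\set{x:\abs{x}<1}$, which $G$-equivariantly deformation retracts onto $\coprod_{j\in I_\nu}G/H_j$, a finite union of $G$-orbits; hence $V$ is $G$-categorical in $X$.

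Finally I would set $U_i:=r^{-1}(W_i)$ for $0\leq i\leq\nu-1$ and $U_\nu:=V$. Each $U_i$ with $i<\nu$ is $G$-invariant and open in $\widetilde U$, hence in $X$, and since the $W_i$ cover $A$ we get $\bigcup_{i<\nu}U_i=r^{-1}(A)=\widetilde U$, so $U_0,\dots,U_\nu$ is a cover of $\widetilde U\cup V=X$ by $\nu+1$ open sets. To check that $U_i$ is $G$-categorical in $X$ for $i<\nu$, one concatenates the $G$-homotopy $U_i\times[0,1]\to X$ deforming the inclusion $U_i\hookrightarrow X$ into $r|_{U_i}\colon U_i\to W_i\subset A\subset X$ (the restriction to $U_i$ of the deformation of $\widetilde U$ onto $A$) with the $G$-homotopy $U_i\times[0,1]\to A\subset X$ obtained by precomposing with $r|_{U_i}$ the $G$-homotopy that witnesses $W_i$ being $G$-categorical in $A$; the resulting $G$-homotopy deforms $U_i\hookrightarrow X$ into a finite union of $G$-orbits of $X$. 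This yields $\cat_G(X)\leq\nu$. I expect the only genuinely delicate point to be the construction of $\widetilde U$ and of its $G$-equivariant retraction onto the lower skeleton --- verifying at once that $\widetilde U$ is open in $X$, $G$-invariant, and $G$-equivariantly deformable onto $A$ --- which is precisely the place where it is convenient that the cells of a $G$-CW complex are modelled on trivial representations, so that the radial structure of each disc $D(\epsilon^\nu)$ is automatically $G$-invariant; the rest is routine bookkeeping with preimages and concatenations of homotopies.
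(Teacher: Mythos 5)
Your proof is correct. It takes a genuinely different route from the paper, which is not inductive: the paper directly covers $X$ by $\nu+1$ sets $U_0,\dots,U_\nu$, where $U_k$ is a $G$-invariant open neighbourhood of the stratum $X_k\setminus X_{k-1}$ inside $X\setminus X_{k-1}$, $G$-equivariantly deformation retracting onto it; since $X_k\setminus X_{k-1}\cong\coprod_{i\in I_k}G/H_i\times\mathring D(\epsilon^k)$ retracts onto the cores $\coprod_{i\in I_k}G/H_i$, each $U_k$ is $G$-categorical. Your inductive version trades the stratum-by-stratum cover for a recursive one: you use one categorical set $V=X\setminus X_{\nu-1}$ for the open top cells (which is exactly the paper's $U_\nu$), and pull back the inductive cover of the $(\nu-1)$-skeleton through the retraction $r\colon\widetilde U\to X_{\nu-1}$. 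The two arguments are equivalent in content, and both lean on the same two ingredients: the identification of open cell strata with $\coprod G/H_i\times\mathring D$ (where triviality of the representation is used to split the disc bundle as a product), and the non-standard version of Definition~\ref{defn:GLS} allowing a categorical set to deform to several orbits. What your version buys is that you actually construct the required $G$-invariant open neighbourhood $\widetilde U$ explicitly as $A$ together with the outer annuli of the top cells, whereas the paper simply asserts the existence of the $U_k$; what the paper's version buys is the absence of the inductive bookkeeping (one concatenation of homotopies per level). One small remark: $G$-equivariance of the radial retraction of $G\times_H D(R)$ onto its zero section holds for any orthogonal $H$-representation $R$, not just the trivial one, since $G$ acts on the left factor and $H$ acts orthogonally on $R$; triviality is used to identify the cell with a product $G/H\times D(\epsilon^k)$ and, more importantly, to have the filtration by index that the $G$-CW structure provides, not to make the radial structure $G$-invariant.
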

\begin{proof}
 Let $\emptyset=X_{-1}\subset X_0\subset\dots\subset X_\nu$ be the filtration of $X$ as in Definition \ref{defn:GCW},
 and for $0\leq k\leq \nu$
 let $U_k\subset X$ be a $G$-invariant neighbourhood of $X_k\setminus X_{k-1}$ inside $X\setminus X_{k-1}$, such that $U_k$ deformation
 retracts $G$-equivariantly onto $X_k\setminus X_{k-1}$. Each difference
 $X_k\setminus X_{k-1}$ takes the form $\coprod_{i\in I_k} G/H_i\times \mathring{D}(\epsilon^k)$,
 where $I_k$ is the finite set indexing cells of index $k$ in $X$, $H_i$ is a closed subgroup of $G$
 for all $i\in I_k$, and $\mathring{D}$ denotes the open unit disc. There is a $G$-equivariant
 retraction of $G/H_i\times \mathring{D}(\epsilon^k)$ onto its core $G/H_i\times \set{0}$;
 operating these retractions simultaneously shows that $U_k$ is $G$-categorical.
\end{proof}
Note that in the previous proof we crucially used our non-standard definition of the $G$-equivariant
Lusternik-Schnirelman category.

\section{Strategy of proof of Theorem \ref{thm:main}}
We fix $m\geq 1$ and consider the complex vector space $\C^m$ with complex coordinates
$z_1,\dots,z_m$; the real coordinates of $\C^m$ are $x_i=\Re(z_i)$ and $y_i=\Im(z_i)$ for all $1\leq i\leq m$.
% We consider the standard Euclidean metric on $\C^m$ and on its open subspaces.
Let $\Delta(z_1,\dots,z_m)$ be a non-zero polynomial with complex coefficients and denote
by $\V\subset\C^m$ the subspace of points of $\C^m$ where $\Delta$ does not vanish.
We write explicitly
% also assume for simplicity that $\Delta$ has no constant term, i.e. $\Delta(0,\dots,0)=0$,
% and we write
\[
 \Delta(z_1,\dots,z_m)=\sum_{\ui\in I}\alpha_{\ui}\cdot z_1^{i_1}\dots z_m^{i_m},
\]
where $I\subset\Z_{\geq0}^m$ is a finite set of multiindices
$\ui=(i_1,\dots,i_m)$, and $\alpha_{\ui}\in\C\setminus\set{0}$ for all $\ui\in I$.

\begin{defn}
\label{defn:homogeneisation}
A \emph{homogeneisation} of $\Delta$ is the assignment of a degree $d_i\in\Z$ to each
of the variables $z_i$, such that $\Delta$ becomes a homogeneous polynomial of some degree $N=N(d_1,\dots,d_m)$.
We represent a homogeneisation by a vector of integers $(d_1,\dots,d_m)\in\Z^m$.
The set of all homogeneisations of $\Delta$ is denoted by $\Homog(\Delta)\subset\Z^m$.
\end{defn}
In formulas, the condition for $(d_1,\dots,d_m)$ to lie in $\Homog(\Delta)$ is that the integer
$N=i_1d_1+\dots+i_md_m$ is the same for all $\ui\in I$.

For example, if $m=3$ and $\Delta=z_1^2-z_2z_3$, then $(1,1,1)$ and $(2,4,0)$ are homogeneisations
of $\Delta$: in the two cases $\Delta$ becomes homogeneous, of degree $2$ and $4$ respectively.
Note that $(0,\dots,0)$ is always a homogeneisation of $\Delta$, making it homogeneous
of degree $0$; in some cases this is the only possible homogeneisation, e.g. for $m=2$ and $\Delta=z_1+z_1^2+z_2^2+z_2^3$.
Note that $\Homog(\Delta)$ is a sub-$\Z$-module of $\Z^m$.

\begin{defn}
 \label{defn:scalaraction}
 For $s\geq 0$ let $\T^s=(\Sone)^s$ be the $s$-dimensional torus, considered as a Lie group (it is the trivial
 group for $s=0$). We denote by $\utheta=(\theta_1,\dots,\theta_s)$ a generic element of $\T^s$,
 where $\theta_i\in\Sone\subset\C$ is a unit complex number. Assume that
 we are given a matrix $\Xi=(\xi_{i,j})_{1\leq i\leq s,1\leq j\leq m}$
 with coefficients in $\Z$, whose rows (of length $m$) lie in $\Homog(\Delta)$.
 Then $\Xi$ induces an action of $\T^s$ on $\V$ \emph{by scalar multiplication} by the formula
 \[
  \utheta\cdot(z_1,\dots,z_m)=(z'_1,\dots,z'_m),
 \]
where for all $1\leq j\leq m$ we set $z'_j=\theta_1^{\xi_{1,j}}\cdot\theta_2^{\xi_{2,j}}\dots\theta_s^{\xi_{s,j}}\cdot z_j$.
\end{defn}
The condition that all rows of $\Xi$ lie in $\Homog(\Delta)$ ensures the equality
\[
 \Delta(z'_1,\dots,z'_m)=\theta_1^{N(\xi_{1,1},\dots,\xi_{1,m})}\cdot\theta_2^{N(\xi_{2,1},\dots,\xi_{2,m})}
 \dots\theta_s^{N(\xi_{s,1},\dots,\xi_{s,m})}\cdot \Delta(z_1,\dots,z_m).
\]
In particular $(z_1,\dots,z_m)$ lies in $\V$ (i.e. $\Delta(z_1,\dots,z_m)\neq 0$) if and only if
$(z'_1,\dots,z'_m)$ lies in $\V$ (i.e. $\Delta(z'_1,\dots,z'_m)\neq 0$), and thus the formula gives
a well-defined action of $\T^s$ on $\V$. The action is not supposed to be free or faithful in any way:
for instance we allow $\Xi$ to be the zero matrix, yielding a trivial action of $\T^s$ on $\V$.
In the rest of the section and in the next section we fix a matrix $\Xi$ as in Definition \ref{defn:scalaraction}. We will
prove the following proposition.
\begin{prop}
 \label{prop:main}
 Assume that all stabilisers of the action of $\T^s$ on $\V$
 have dimension $\leq t$, for some $0\leq t\leq s$. Then $\V\times \V$ is $G$-equivariantly homotopy
 equivalent to a $G$-CW complex $X$ with $\ind(X)\leq 2m-s+t$.
\end{prop}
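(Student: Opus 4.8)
The plan is to produce a $\T^s$-invariant Morse function on $\V \times \V$ whose critical manifolds all have index $\leq 2m - s + t$, and then invoke Theorem~\ref{thm:GMorsefinal} to conclude that $\V \times \V$ is $\T^s$-equivariantly homotopy equivalent to a finite $\T^s$-CW complex of index $\leq 2m - s + t$. (Throughout, $G = \T^s$.) By Proposition~\ref{prop:CWLS} this $G$-CW structure will give $\cat_G(\V \times \V) \leq 2m - s + t$, and then Proposition~\ref{prop:TCvscat} applied to $\V$ (which must be checked to be $G$-connected) will yield $\TC_G(\V) \leq 2m - s + t$; the inequality $\TC(\V) \leq \TC_G(\V)$ is automatic. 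Since $\V \times \V$ is itself a discriminantal variety in $\C^{2m}$ (defined by the product discriminant $\Delta(z_1,\dots,z_m)\cdot\Delta(z_{m+1},\dots,z_{2m})$) carrying a $\T^{2s}$-action by scalar multiplication, and in fact the diagonal $\T^s \hookrightarrow \T^{2s}$ acts with the same homogenisation data duplicated, it suffices to prove the statement for a single copy: namely, for a discriminantal variety $\V \subset \C^m$ with a scalar $\T^s$-action all of whose stabilisers have dimension $\leq t$, one shows $\V$ is $G$-equivariantly homotopy equivalent to a finite $G$-CW complex of index $\leq m - s + t$, and then $\V \times \V$ (with the diagonal action, stabilisers still of dimension $\leq t$... wait — one must be careful that the diagonal action has stabilisers of dimension $\leq t$, which holds since $\mathrm{Stab}_{(z,w)} = \mathrm{Stab}_z \cap \mathrm{Stab}_w \subseteq \mathrm{Stab}_z$) inherits index $\leq 2m - s + t$ by taking a sum of two such Morse functions, one on each factor. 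Actually it is cleaner to work directly with $\V \times \V \subset \C^{2m}$ as a discriminantal variety of the stated form and prove the bound $2m - s + t$ in one go, since the hypothesis is phrased exactly for the ambient $2m$-dimensional situation.

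The construction of the Morse function is the heart of the matter, and is what Section~6 of the paper is devoted to; here I only sketch the expected shape. The natural candidate is a function of the form
\[
 f(z) = \sum_{j=1}^{m} a_j \abs{z_j}^2 + \frac{1}{\abs{\Delta(z)}^2},
\]
or a perturbation thereof: the first term is a $\T^s$-invariant exhausting function on $\C^m$ (recall the action is by scalar multiplication, so each $\abs{z_j}^2$ is $G$-invariant), and the second term blows up as one approaches the zero locus of $\Delta$, forcing properness of $f$ on the open variety $\V$. One then needs: (i) that after a $G$-invariant perturbation supported away from infinity and away from $\del\V$ — legitimate by Lemma~\ref{lem:density} with $l=2$ — the function becomes a genuine $G$-invariant Morse function in the sense of Definition~\ref{defn:GMorse}, i.e. with finitely many non-degenerate critical orbits; and (ii) the crucial index estimate. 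For (ii) the point is that each critical manifold $N$ is a $G$-orbit, hence of dimension $\dim G - \dim \mathrm{Stab}$; since $\dim G = s$ and $\dim\mathrm{Stab} \leq t$, we get $\dim N \geq s - t$. The index of $N$ is the negativity index of the Hessian on $T_pM / T_pN$, a space of real dimension $\dim \V - \dim N = 2m - \dim N \leq 2m - (s-t) = 2m - s + t$. So \emph{every} non-degenerate critical manifold automatically has index at most $2m - s + t$, regardless of the detailed geometry of $\Delta$ — the only real work is arranging non-degeneracy while keeping $G$-invariance and properness, and that is where Lemma~\ref{lem:density} does the job provided one first checks that the unperturbed $f$ has all its critical points confined to a compact set (so that the perturbation can be localized).

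The main obstacle, then, is verifying that a function like the one above is proper on $\V$ and has all critical points inside a compact $K \subset \V$, so that Lemma~\ref{lem:density} applies. Properness requires controlling the behavior of $1/\abs{\Delta}^2$ near the (possibly non-compact, possibly singular) hypersurface $\{\Delta = 0\}$ and simultaneously controlling $\sum a_j \abs{z_j}^2$ near infinity — one must rule out sequences escaping to infinity along which $\Delta$ also grows, which is precisely where the \emph{homogeneisation} hypothesis enters: the rows of $\Xi$ lying in $\Homog(\Delta)$ constrain how $\abs{\Delta}$ can behave along the orbit directions, and one likely needs to choose the coefficients $a_j$ (or a more elaborate polynomial in the $\abs{z_j}^2$) compatibly with the degrees $d_i$ to ensure the sum of the two terms is exhausting. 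A secondary subtlety is confirming $G$-connectedness of $\V$ so that Proposition~\ref{prop:TCvscat} can be invoked: one needs $\V^H$ path-connected or empty for every closed subgroup $H < \T^s$, which should follow because $\V^H$ is again the complement of a hypersurface in a coordinate subspace $\C^{m'} \subseteq \C^m$ (the coordinates fixed by $H$), hence a complex affine variety of the same type, and the complement of a proper subvariety in an irreducible affine variety is connected. I expect Proposition~\ref{prop:main} to be stated and proved modulo the existence of the Morse function (deferred to Section~6), exactly as the introduction promises.
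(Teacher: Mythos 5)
Your plan correctly identifies the overall structure (find a $\T^s$-invariant Morse function on $\V\times\V$ with indices $\leq 2m-s+t$, apply Theorem~\ref{thm:GMorsefinal}), but the index estimate — which you describe as the ``crucial'' step — is wrong, and this is the heart of the proposition.

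You claim that ``every non-degenerate critical manifold automatically has index at most $2m-s+t$, regardless of the detailed geometry of $\Delta$,'' by noting that the Hessian lives on $T_pM/T_pN$ and counting dimensions. But $M=\V\times\V$ has real dimension $4m$, not $2m$; your computation $\dim\V-\dim N=2m-\dim N$ silently replaces $\V\times\V$ by a single copy of $\V$. Applied to $\V\times\V$, the pure dimension count gives only $\iota_-\leq 4m-(s-t)=4m-s+t$, which is far weaker than the claim. The missing ingredient — and it is exactly the ``detailed geometry of $\Delta$'' you dismiss — is the Andreotti--Frankel estimate: the paper's Lemma~\ref{lem:symmetricsignature} shows that for $h=|\eta|^2$ with $\eta$ holomorphic and nonvanishing, the Euclidean Hessian $\fH(h)_p$ has negativity index $\leq m$. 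Applied to $\eta=1/\Delta$ and combined with the strictly positive definite term $\sum|z_j|^2$, this gives $\fH(g)_p$ positivity index $\geq m$ on $\V$, hence $\geq 2m$ on $\V\times\V$. Only then does the signature decomposition $\iota_0+\iota_-+\iota_+=4m$, with $\iota_+\geq 2m$ and $\iota_0\geq s-t$, yield $\iota_-\leq 2m-s+t$. Without Lemma~\ref{lem:symmetricsignature} your sketch cannot reach the stated bound.

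Your alternative ``factoring'' idea — Morse-ify each factor $\V$ separately and sum — also fails, for a different reason: the sum $f_1\oplus f_2$ on $\V\times\V$ is \emph{not} a $\T^s$-invariant Morse function in the sense of Definition~\ref{defn:GMorse} for the \emph{diagonal} $\T^s$-action, because its critical locus consists of products of orbits $N_1\times N_2$, which are not single diagonal orbits. This is precisely why the paper perturbs $\tilde f=g\oplus g$ directly on $\V\times\V$ via Lemma~\ref{lem:density}, using the compactness of $\crit(\tilde f)=\crit(g)\times\crit(g)$ and the semi-algebraicity of $\crit(g)$. A smaller point: properness of $g$ is elementary ($\{g\leq\lambda\}$ is both closed in $\C^m$, being inside $\{|\Delta|\geq 1/\sqrt\lambda\}$, and bounded) and does not need the homogeneisation hypothesis; the latter is used only for $\T^s$-invariance of $g$.
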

Using Theorem \ref{thm:GMorsefinal}, in order to prove Proposition
\ref{prop:main} it suffices to exhibit a $\T^s$-invariant Morse function
on the manifold $M=\V\times\V$ with indices $\leq 2m-s+t$: we will do it in the next section.
In the remainder of this section we will see how Theorem \ref{thm:main} follows from Proposition \ref{prop:main}.
\begin{lem}
 \label{lem:VGconnected}
 The $\T^s$-space $\V$ is $G$-connected.
\end{lem}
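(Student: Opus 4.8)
The plan is to make the fixed-point sets $\V^H$ completely explicit and then recognise each of them as either empty or the complement of a hypersurface in a complex affine space, the latter being automatically path-connected.

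First I would record the shape of the action. For each $1\le j\le m$ let $\chi_j\colon\T^s\to\Sone$ be the character $\chi_j(\utheta)=\theta_1^{\xi_{1,j}}\theta_2^{\xi_{2,j}}\cdots\theta_s^{\xi_{s,j}}$, so that the action of Definition \ref{defn:scalaraction} reads $\utheta\cdot(z_1,\dots,z_m)=(\chi_1(\utheta)z_1,\dots,\chi_m(\utheta)z_m)$. Fix a closed subgroup $H<\T^s$ and put $S=S(H)=\set{j\in\set{1,\dots,m} : \chi_j|_H\equiv 1}$, and let $\C^S\subseteq\C^m$ be the coordinate subspace $\set{z : z_j=0 \text{ for } j\notin S}$. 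A direct check gives $(\C^m)^H=\C^S$: a point with $z_j=0$ for $j\notin S$ is fixed by $H$ because $\chi_j$ is trivial on $H$ for $j\in S$; conversely, if $z_j\ne 0$ for some $j\notin S$, then by definition of $S$ some $\utheta\in H$ satisfies $\chi_j(\utheta)\ne 1$, so $z$ is not fixed. Hence $\V^H=\V\cap\C^S$.

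Next I would split into two cases according to the restricted polynomial $\Delta_S$, i.e.\ the polynomial in the variables $(z_j)_{j\in S}$ obtained from $\Delta$ by setting $z_j=0$ for all $j\notin S$. If $\Delta_S$ is identically zero, then every point of $\C^S$ lies in the zero locus of $\Delta$, so $\V^H=\V\cap\C^S=\emptyset$, which is permitted by Definition \ref{defn:Gconnected}. If $\Delta_S\not\equiv 0$, then $\V^H$ is the complement in $\C^S\cong\C^{\abs{S}}$ of the hypersurface $\set{\Delta_S=0}$; to see this is path-connected, given $a,b\in\V^H$ I would restrict $\Delta_S$ to the complex line $t\mapsto a+t(b-a)$, $t\in\C$. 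This produces a one-variable polynomial $q(t)=\Delta_S(a+t(b-a))$ with $q(0)=\Delta_S(a)\ne 0$, hence $q\not\equiv 0$ and $q$ has only finitely many roots, none of which equals $1$. Choosing a path in $\C$ from $0$ to $1$ avoiding those finitely many roots and composing with $t\mapsto a+t(b-a)$ yields a path from $a$ to $b$ inside $\V^H$.

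Since $H$ was an arbitrary closed subgroup of $\T^s$, this shows every $\V^H$ is path-connected or empty, i.e.\ $\V$ is $G$-connected for $G=\T^s$. I do not anticipate a serious obstacle here: the only point requiring care is the bookkeeping that identifies $(\C^m)^H$ with the coordinate subspace $\C^S$, and once that is done the path-connectedness of a hypersurface complement in affine space is entirely elementary via the complex-line argument above.
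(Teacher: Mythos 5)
Your proposal follows the paper's proof essentially verbatim: both identify $\V^H$ as the intersection of $\V$ with the coordinate subspace $\C^S$ (your $S$ being the complement of the paper's index set $\{j_1,\dots,j_l\}$), and both split on whether the restricted polynomial vanishes identically. The only cosmetic difference is that you spell out the elementary complex-line argument for path-connectedness of a hypersurface complement, which the paper takes as known.
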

\begin{proof}
 First, note that $\V$ is path-connected, as every complement of the zero-locus of a (non-zero) polynomial
 in $\C^m$ is (this, incidentally, shows that our attempt to extimate $\TC(\V)$ is not
 completely pointless!).
 Let $H<\T^s$ be a non-trivial closed subgroup. For a point $(z_1,\dots,z_m)\in\V$, the property
 that $(z_1,\dots,z_m)$ belongs to $\V^H$ can be characterised as follows: for all $\utheta\in H$
 and for all $1\leq j\leq m$, either $z_j=0$ or $\theta_1^{\xi_{1,j}}\cdot\dots\theta_s^{\xi_{s,j}}=1\in\Sone$.
 In particular every $\utheta\in H$ imposes some restrictions of the form $z_j=0$ for points
 of $\V^H$.
 
 Reasoning the other way around, let $\set{j_1,\dots,j_l}\subset\set{1,\dots,m}$ be the subset of indices $j$
 for which there exists $\utheta\in H$ with $\theta_1^{\xi_{1,j}}\cdot\dots\theta_s^{\xi_{s,j}}\neq1$:
 then $\V^H$ is the intersection of $\V$ with the sub-vector space of $\C^m$ determined
 by the equations $z_{j_i}=0$ for all $1\leq i\leq l$. This intersection is again the complement
 of the zero locus of a polynomial in a smaller complex vector space, so it is either empty (if the polynomial
 restricts to the zero polynomial on the subspace) or path-connected.
\end{proof}

By Lemma \ref{lem:VGconnected} and Proposition \ref{prop:TCvscat} we
have $\TC_G(\V)\leq \cat_G(M\times M)$,
and since the $G$-equivariant Lusternik-Schnirelman category is a homotopy invariant,
we can write $\TC_G(M)\leq \cat_G(X)$,
where $X$ is given by Proposition \ref{prop:main}.
Since $X$ is a $G$-CW complex of index $\leq 2m-s+t$,
using Proposition \ref{prop:CWLS} we obtain $\cat_G(X)\leq 2m-s+t$.

\section{A good Morse function on \texorpdfstring{$\V\times\V$}{VxV}}
\label{sec:goodMorse}
We need to find a $\T^s$-equivariant Morse function $f\colon \V\times \V\to\R$
with indices $\leq 2m-s+t$, using the notation from Proposition \ref{prop:main}.
We will use essentially the same method used by Andreotti and Frankel \cite{AndreottiFrankel}
in their proof of the Lefschetz hyperplane theorem.

We first define a function $g\colon\V\to\R$ by the equation
\[
 g(z_1,\dots,z_m)=|z_1|^2+\dots+|z_m|^2+\left|\frac{1}{\Delta(z_1,\dots,z_m)}\right|^2.
\]
Note that $g$ is invariant with respect to the action of $\T^s$ by scalar multiplication.
Note also that $g$ takes values $\geq0$
and is proper: for $\lambda>0$ let $\set{g\leq\lambda}\subset\V$ denote the subspace
of points $(z_1,\dots,z_m)\in\V$ for which $g(z_1,\dots,z_m)\leq \lambda$;
then $\set{g\leq\lambda}$ is contained in the closed subspace $\set{|\Delta|\geq \frac 1{\sqrt{\lambda}}}\subset\C^m$,
so $\set{g\leq\lambda}$ is also closed in $\C^m$; moreover $\set{g\leq\lambda}$ is contained
in the closed ball of radius $\sqrt{\lambda}$ centred at 0, hence $\set{g\leq\lambda}$ is compact.

The critical locus $\crit(g)\subseteq\V$, i.e. the subspace of points of $\V$ where $dg$ vanishes,
can be characterised by algebraic equations and inequalities in the real coordinates $x_i=\Re(z_i)$ and $y_i=\Im(z_i)$,
hence $\crit(g)$ is a real semi-algebraic subset of $\C^m=\R^{2m}$. As such, $\crit(g)$ has finitely
many connected components (see for instance \cite[Theorem 2.4.4]{RAG}).

Note also that the vanishing of $dg$ on a connected component of $\crit(g)$ implies that $g$ is constant
on this connected component. Since $g$ is proper on $\V$ and since $\crit(g)$ is closed in $\V$, every
component of $\crit(g)$ is compact; since $\crit(g)$ has finitely many connected components, also $\crit(g)$
is compact.

Recall that if $p\in\C^m$ is a point, $U$ is a neighbourhood of $p$ and $h\colon U\to\R$ is a smooth function,
we can define a Euclidean Hessian $\fH(h)_p$ as the $2m\times 2m$ matrix of second partial derivatives,
computed with respect to the Euclidean coordinates $x_i$ and $y_i$ of $\C^m$. The Euclidean Hessian $\fH(h)_p$
can be considered as a bilinear form on $T_p(\C^m)$, but its definition strongly depends on the choice of
the coordinates we use to parametrise a neighbourhood of $p$, in our case the Euclidean coordinates.
On the other hand, if $p$ happens to be a critical
point for $h$, i.e. $dh$ vanishes at $p$, then the Hessian $H(h)_p$ is a bilinear form on $T_p\C^m$ which
only depends on $p$ and $g$ but not on the coordinates used to parametrise a neighbourhood of $p$;
moreover $\fH(h)_p$ and $H(h)_p$ agree if $p\in\crit(h)$.

\begin{lem}
 \label{lem:symmetricsignature}
 Let $p\in\C^m$ be a point and let $U$ be an open neighbourhood of $p$;
 let $\eta\colon U\to\C\setminus\set{0}$ be a holomorphic function, and define $h\colon U\to \C^m$ by $h(p)=|\eta(p)|^2$.
 Consider the Euclidean Hessian $\fH(h)_p$: then the negativity index of $\fH(h)_p$ is at most $m$.
\end{lem}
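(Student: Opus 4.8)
The plan is to write the Euclidean Hessian $\fH(h)_p$ explicitly in Wirtinger coordinates and then to recognize it as the sum of a real quadratic form of balanced signature with a positive semidefinite form. First I would exploit holomorphicity of $\eta$, i.e. $\partial\eta/\partial\bar z_j\equiv 0$ for all $j$, to compute the second-order Wirtinger derivatives of $h=\eta\bar\eta$ at $p$: all terms carrying a $\partial/\partial\bar z$ of $\eta$ drop out, leaving
\[
 \frac{\partial^2 h}{\partial z_j\partial z_k}(p)=\bar a\, w_{jk},\qquad \frac{\partial^2 h}{\partial z_j\partial\bar z_k}(p)=v_j\bar v_k,
\]
where $a=\eta(p)$, $v_j=\frac{\partial\eta}{\partial z_j}(p)$ and $w_{jk}=\frac{\partial^2\eta}{\partial z_j\partial z_k}(p)$ (symmetric in $j,k$). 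Next I would invoke the standard identity—a bookkeeping exercise in expanding $\partial/\partial x_j$ and $\partial/\partial y_j$ in terms of $\partial/\partial z_j$ and $\partial/\partial\bar z_j$—expressing the real Hessian of a real-valued function: identifying a real tangent vector $\xi\in T_p\C^m$ with $\zeta\in\C^m$ via $\zeta_j=\xi_j^x+i\xi_j^y$, one has
\[
 \fH(h)_p(\xi,\xi)=2\Re\pa{\sum_{j,k}\frac{\partial^2 h}{\partial z_j\partial z_k}(p)\,\zeta_j\zeta_k}+2\sum_{j,k}\frac{\partial^2 h}{\partial z_j\partial\bar z_k}(p)\,\zeta_j\bar\zeta_k.
\]
Substituting the values above, this becomes $\fH(h)_p(\xi,\xi)=2\Re(\bar a\,B(\zeta))+2\abs{L(\zeta)}^2$, where $B(\zeta)=\sum_{j,k}w_{jk}\zeta_j\zeta_k$ is a $\C$-valued quadratic form and $L(\zeta)=\sum_j v_j\zeta_j$ a $\C$-linear functional on $\C^m$.

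The decisive point is then purely formal. The real quadratic form $Q(\zeta)=\Re(\bar a\,B(\zeta))$ on $\C^m\cong\R^{2m}$ satisfies $Q(i\zeta)=-Q(\zeta)$, since $B(i\zeta)=-B(\zeta)$; as multiplication by $i$ is an $\R$-linear automorphism of $\R^{2m}$, the forms $Q$ and $-Q$ are congruent, so $Q$ has as many negative as positive eigenvalues and hence negativity index at most $m$. Finally $2\abs{L(\zeta)}^2$ is positive semidefinite, and adding a positive semidefinite form cannot raise the negativity index (a form that is negative definite on a subspace remains so after subtracting a positive semidefinite one). Therefore the negativity index of $\fH(h)_p=2Q+2\abs{L}^2$ is at most that of $2Q$, which is at most $m$.

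I do not expect a genuine obstacle here: the argument is a short chain of linear-algebra manipulations, and everything is local at $p$ (no critical point hypothesis is needed, which is precisely why one works with the coordinate-dependent Euclidean Hessian). The only delicate part is tracking signs and factors of $2$ when passing between the real coordinates $x_j,y_j$ and the Wirtinger operators $\partial/\partial z_j,\partial/\partial\bar z_j$ in the two displayed formulas above.
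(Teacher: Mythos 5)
Your proof is correct, and it follows the same overall decomposition as the paper: the Euclidean Hessian splits into a piece coming from the second-order holomorphic derivatives of $\eta$ (your $2\Re(\bar a\,B(\zeta))$, the paper's $\fH^2$) plus a positive semidefinite piece coming from the first-order derivatives (your $2\abs{L(\zeta)}^2$, the paper's $\fH^1\leftrightarrow\Lambda\bar\Lambda^T$), and one concludes since adding a positive semidefinite form cannot increase the negativity index. Where you genuinely diverge is in the argument that the first piece has negativity index at most $m$: the paper normalizes $\eta(p)=1$, performs a $\C$-linear change of coordinates to put the complex symmetric matrix of second derivatives into the standard diagonal form with $r$ ones and $m-r$ zeros, and then reads off the signature $(2m-2r,r,r)$ from the identity $z_i^2+\bar z_i^2=2x_i^2-2y_i^2$. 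You instead observe that $Q(i\zeta)=-Q(\zeta)$ because $B(i\zeta)=-B(\zeta)$, so multiplication by $i$ (an $\R$-linear automorphism of $\R^{2m}$) conjugates $Q$ to $-Q$, forcing balanced signature. Your route is shorter and more conceptual: it avoids the normalization and the reduction of a complex symmetric bilinear form to canonical form, and it makes the symmetry responsible for the bound explicit. The paper's route gives a little more — it pins down the exact signature of $\fH^2$ in terms of the rank $r$ of $B$ — but that extra information is not used.
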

\begin{proof}
Up to a change of coordinates of $U$ by translation in $\C^m$, we can assume $p=0$.
We consider the Taylor expansion of $\eta$ around 0:
\[
 \eta(z_1,\dots,z_m)=c+\sum_{i=1}^m\lambda_i z_i+\sum_{i,j=1}^m \mu_{i,j}z_iz_j+\mathrm{l.o.t.},
\]
where $c=\eta(0)$, $\lambda_i=\frac{\partial \eta}{\partial z_i}(0)$, $\mu_{i,j}=\frac{\partial^2 \eta}{\partial z_i\partial z_j}$
and the lower order terms are those of degree $\geq 3$. Up to multiplying $\eta$ by $1/c$ (and consequently
$h$ by the positive number $1/|c|^2$), we can assume $c=1$.
Up to a $\C$-linear change of coordinates of $U$, we can further assume
that the $m\times m$ matrix $(\mu_{i,j})$ is diagonal, with entries $\mu_{1,1},\dots, \mu_{r,r}$ equal to 1 and entries
$\mu_{r+1,r+1},\dots,\mu_{m,m}$ equal to 0, for some $0\leq r\leq m$. We can therefore assume that the Taylor
expansion of $\eta$ has the form:
\[
 \eta(z_1,\dots,z_m)=1+\sum_{i=1}^m\lambda_i z_i+z_1^2+z_2^2+\dots+z_r^2+\mathrm{l.o.t.}.
\]
% Let $\Lambda$ be the $m\times 1$ matrix with coefficients $\lambda_1,\dots,\lambda_m$
% Then the Taylor expansion of $h$ near 0 is the following,
% where we write $\lambda_i=a_i+\sqrt{-1}b_i$, and $M$ is the $2m\times 2$ matrix with $M_{2i+1,1}=M_{2i+2,2}=a_i$
% and $M_{2i+1,2}=-M{2i+2,1}=b_i$ for all $0\leq i\leq m-1$:

Then the Taylor expansion of $h$ is the following, using the variables $z_1,\dots,z_m$ and their conjugates $\bar z_1,\dots,\bar z_m$,
and recalling the identity $h=\eta\cdot\bar\eta$:
\[
 h(z_1,\dots,z_m)=1+\pa{\sum_{i=1}^m \lambda_iz_i+\bar\lambda_i\bar z_i} + 
 \pa{\sum_{i,j=1}^m \lambda_i\bar\lambda_j z_i\bar z_j} + \pa{\sum_{i=1}^r z_i^2+\bar z_i^2}+\mathrm{l.o.t.}
\]
% \[
%  h(x_1,\dots,x_m,y_1,\dots y_m)=1+\pa{\sum_{i=1}^m a_ix_i-b_iy_i} + 
%  \big(\sum_{i,j=1}^m \lambda_i\bar\lambda_j z_i\bar z_j\big) + \big(\sum_{i=1}^r z_i^2+\sum_{i=1}^r\bar z_i^2\big).
% \]
The Euclidean Hessian $\fH(h)_0$ can be regarded as the sum of two $\R$-bilinear forms $\fH^1$ and $\fH^2$,
the first corresponding
to the quadratic form $\big(\sum_{i,j=1}^m \lambda_i\bar\lambda_j z_i\bar z_j\big)$,
% (which can be
% written as a quadratic form in the variables $x_1,y_i$ with coefficients expressed in terms of the numbers $a_i,b_i$),
the second corresponding to the quadratic form  $\big(\sum_{i=1}^r z_i^2+\bar z_i^2\big)$.

The bilinear form $\fH^2$ can be written, with respect to the coordinates $x_i$ and $y_i$,
as a $2m\times 2m$ real diagonal matrix,
with $r$ occurrences of $2$, $r$ occurrences of $-2$ and $2m-2r$ occurrences of $0$ on the main diagonal:
this follows from the equality $z_i^2+\bar z_i^2=2x_i^2-2y_i^2$ for all $1\leq i\leq r$. In particular
$\fH^2$ has signature $(2m-2r,r,r)$, so it has negativity index $r\leq m$.

The bilinear form $\fH^1$ comes from the sesquilinear form on $\C^m$ represented by the $m\times m$ complex matrix
$\Lambda\cdot\bar\Lambda^{T}$, where $\Lambda$ is the $m\times 1$ matrix with entries $\lambda_1,\dots,\lambda_m$.
Every sesquilinear form of type $\Lambda\cdot\bar\Lambda^{T}$ is semidefinite positive on $\C^m$, and hence semidefinite
positive when considered as a $\R$-bilinear form on $\R^{2m}$; hence $\fH^1$ is semidefinite positive.

It follows that the negativity index of $\fH(h)_0=\fH^1+\fH^2$ is at most $r\leq m$.
\end{proof}
We apply Lemma \ref{lem:symmetricsignature} to the function $\eta(z_1,\dots,z_m)=1/\Delta(z_1,\dots,z_m)$,
obtaining that for all $p\in \V$ the matrix $\fH(|\eta|^2)$ has negativity index $\leq m$;
in other words, the sum of its nullity and positivity indices is at least $m$.

On the other hand $\fH(|z_1|^2+\dots+|z_m|^2)_p$ is (strictly) positive definite:
we have indeed that $\fH(|z_1|^2+\dots+|z_m|^2)_p$ is the $2m\times 2m$ identity matrix, independently of $p\in\V$.
We obtain therefore that for all $p\in\V$ the matrix $\fH(g)_p$ has positivity index $\geq m$.

Now we consider the function $\tilde f\colon \V\times \V\to \R$ defined by $\tilde f(p,p')=g(p)+g(p')$.
Note that $\tilde f$ is invariant with respect to the diagonal action of $\T^s$,
takes values $\geq 0$ and is proper.

The differential of $\tilde f$ is the ``direct'' sum
of the pullbacks of the differentials of $g$ at $p$ and $p'$: this implies that a critical point $(p,p')$
for $\tilde f$ is precisely a couple of critical points $p,p'$ for $g$; as a consequence
$\crit\pa{\tilde f}=\crit(g)\times\crit(g)$ is compact.

The Euclidean Hessian $\fH(\tilde f)_{(p,p')}$ computed at any point $(p,p')\in\V\times \V$ is the block
sum of the Euclidean Hessians $\fH(g)_p$ and $\fH(g)_{p'}$: in particular the positivity
index of $\fH(\tilde f)_{(p,p')}$ is at least $2m$ for all $(p,p')\in\V\times \V$.

By Lemma \ref{lem:density} we can find a $\T^s$-invariant Morse approximation $f\colon\V\times\V\to\R$ of $\tilde f$;
we can ensure that $\tilde f$ and $f$ agree outside a $\T^s$-invariant relatively compact open subspace $U\subset\V\times \V$
containing $\crit\pa{\tilde f}$ in its interior; and we can ask that $\tilde f$ and $f$ are close in the $C^2$ norm,
in particular that also $f$, as $\tilde f$, has the property that $\fH(f)_{(p,p')}$ has positivity index at least
$2m$ for all $(p,p')\in U$.

We claim that $f$ has critical points of index $\leq 2m+s-t$. Let $(p,p')$ be a critical point for $f$:
then $H(f)_{(p,p')}$ can be represented in coordinates as $\fH(f)_{(p,p')}$, and thus it has positivity
index at least $2m$.
Denote by $N\subset \V\times \V$ the critical manifold containing $(p,p')$;
then $N\cong \T^s/H$, for some closed subgroup $H<\T^s$. We have that $H$ is the intersection
of the stabilisers of $p$ and $p'$ by the action of $\T^s$ on $\V$, therefore our hypothesis
on $t$ implies that $\dim H\leq t$, and hence $\dim N\geq s-t$.
Since the bilinear form $H(f)_{(p,p')}$ vanishes on $T_{(p,p')}N$,
we have that the nullity index of $H(f)_{(p,p')}$ is at least $s-t$. It follows that the negativity
index of $H(f)_{(p,p')}$ is at most $4m-(2m+s-t)=2m-s+t$.

\section{Applications}
In this section we apply Theorem \ref{thm:main} to configuration spaces.
Fix $n\geq 2$, and note that $F_n$ deformation
retracts onto its subspace $F^0_n$ of ordered configurations $(w_1,\dots,w_n)$
whose barycentre $(w_1+\dots+w_n)/n$ is equal to $0$. The explicit
deformation retraction $\rho\colon F_n\times[0,1]\to F_n$ is given by the following formula:
\[
 \rho(w_1,\dots,w_n;t)=\pa{w_1-t\frac{w_1+\dots+w_n}n,\dots,w_n-t\frac{w_1+\dots+w_n}n}.
\]
Note that $F^0_n$ is a $\T$-invariant subspace of $F_n$ and that
$\rho$ is a $\T$-equivariant deformation retraction, so that $F^0_n$ and $F_n$ are $\T$-equivariantly
homotopy equivalent: we can study the space $F^0_n$ instead of $F_n$.
Note also that $\rho$ is equivariant with respect to the action of the symmetric
group $\fS_n$ on $F_n$ by permutation of labels: thus $\rho$ induces a $\T$-equivariant deformation
retraction of $C_n$ onto its $\T$-invariant subspace $C^0_n$ of unordered configurations $\set{w_1,\dots,w_n}$
with barycentre $(w_1+\dots+w_n)/n$ equal to $0$.

% We can use the coordinates $z_1,\dots,z_{n-1}$ to parametrise $F^0_n$ as a subspace of $\C^{n-1}$,
% since the last coordinate $z_n$ is equal to $-z_1-\dots-z_{n-1}$ on $F^0_n$. In this way
% $F^0_n$ is identified with the subspace $\V\subset\C^{m-1}$ of points $(z_1,\dots,z_{n-1})$
% for which the polynomial
% \[
%  \Delta(z_1,\dots,z_{n-1}):=\pa{\prod_{1\leq i<j\leq n}
% \]

We first focus on the unordered case, so we want to apply Theorem \ref{thm:main} to the space $C_n^0$.
For each $\set{w_1,\dots,w_n}\in C_n^0$ we can form the polynomial
\[
 P(w)=w^n+\sum_{i=2}^n(-1)^ia_iw^{n-i}:=(w-w_1)\cdot(w-w_2)\cdot\dots\cdot(w-w_n),
\]
where the coefficient
$a_i$ is the $i$-th elementary symmetric function in $w_1,\dots,w_n$;
note that $a_1=\sum_{i=1}^nw_i$ vanishes by hypothesis, therefore we have omitted the corresponding term from the sum above.
The discriminant
\[
\Delta(P)=\prod_{1\leq i<j\leq n}(w_i-w_j)^2
\]
is a symmetric polynomial in the variables $w_1,\dots,w_n$, hence
it can be expressed as a polynomial $\Delta^C(a_2,\dots,a_n)$ in the elementary symmetric functions
$a_2,\dots,a_n$: again we omit $a_1$ because this elementary
symmetric function is constantly zero on $C_n^0$.

Note also that for $2\leq i\leq n$ the elementary symmetric function $a_i$ has
total degree $i$ in the variables $w_i$, whereas the polynomial $\Delta^C$ has
total degree $n(n-1)$: in particular, for all $\theta\in\T=\Sone$,
if we rotate the configuration $\set{w_1,\dots,w_n}\in C^0_n$ by $\theta$, i.e. we replace it with
the configuration $\set{\theta w_1,\dots,\theta w_n}$, then the symmetric function
$a_i$ is multiplied by $\theta^i$ and the value of $\Delta^C$ is multiplied by $\theta^{n(n-1)}$.

Consider now $\C^{n-1}$ with coordinates $z_2,\dots,z_n$, and let $\V^C$ be the complement
of the zero locus of the polynomial $\Delta^C(z_2,\dots,z_n)$.
The fundamental theorem of algebra establishes a homeomorphism between
$C^0_n$ and $\V^C$; moreover the previous discussion shows that $(2,3,\dots,n)$ is a homogeneisation
of $\Delta^C(z_2,\dots,z_n)$, so we can consider the corresponding action of $\T$ on $\V^C$ by scalar
multiplication: this action corresponds under the homeomorphism $C^0_n\cong \V^C$ to the action
of $\T$ on $C^0_n$ by rotations.

It is left to check that the stabilisers of the action of $\T$ on $\V^C$ have dimension at most 0,
i.e. they are finite groups. Since the only closed subgroups of $\T$ are either finite cyclic groups
or the entire $\T$, it suffices to check that no point of $\V^C$ is fixed by the entire $\T$,
or equivalently no point of $C^0_n$ is fixed by the entire $\T$: the second statement is a straightforward
consequence of the assumption $n\geq 2$.

Applying Theorem \ref{thm:main} we obtain that $\TC_{\T}(C^0_n)=\TC_{\T}(\V^C)\leq 2(n-1)-1+0=2n-3$.

We pass now to the ordered case. Similarly as above,
we can parametrise $F^0_n$ using the coordinates $w_1,\dots,w_{n-1}$, because on $F^0_n$ we can express
the last coordinate $w_n=-(w_1+\dots+w_{n-1})$. The defining inequality for $F_n\subset\C^m$,
which is $\prod_{1\leq i<j\leq n}(w_i-w_j)\neq 0$, restricts to the following defining inequality for $F^0_n$, expressed
only in terms of the variables
$w_1,\dots,w_{n-1}$:
\[
 \Delta^F(w_1,\dots,w_{n-1}):=\!\pa{\prod_{1\leq i<j\leq n-1}(w_i-w_j)\!}\pa{\prod_{i=1}^{n-1}(w_i+(w_1+\dots+w_{n-1}))}\neq 0.
\]
In particular $F^0_n$ is homeomorphic to the complement $\V^F\subset\C^{n-1}$ of the zero locus of the polynomial
$\Delta^F(z_1,\dots,z_{n-1})$. Since $\Delta^F$ is homogeneous in the variables $z_1,\dots,z_{n-1}$
we have that $(1,\dots,1)$ is a homogeneisation for $\Delta^F$; the corresponding action of $\T$ on $\V$
by scalar multiplication corresponds, under the homeomorphism $F^0_n\cong\V^F$, to the action of $\T$
on $F^0_n$ by rotation.

It is left to check that the stabilisers of the action of $\T$ on $\V^F$, or equivalently on $F^0_n$,
are finite groups, and this follows again, as in the unordered case, from the inequality $n\geq 2$. Theorem \ref{thm:main}
applies again to give $\TC_{\T}(F^0_n)=\TC_{\T}(\V^F)\leq 2(n-1)-1+0=2n-3$.

\section{Outlook: finding explicit motion planners}
In principle one can use the strategy of the proof of Theorem \ref{thm:upperbound} to construct an explicit
$\T$-equivariant motion planner on $C^n$ (or on $F^n$). One fixes a $\T$-invariant
Morse function $f\colon C^0_n\times C^0_n\to\R$, and makes a list of its critical manifolds
$N_1,\dots,N_r\subset C^0_n\times C^0_n$. For each $N_i\cong\T/H_i$, where $H_i$ is some finite cyclic group,
one chooses a point $(q_i,q'_i)\in N_i$ and fixes a path from $q_i$ to $q'_i$ inside the subspace $(C_n^0)^{H_i}$,
which we know is path-connected (see Lemma \ref{lem:VGconnected}); we extend this path, in the unique possible way,
to a $\T$-equivariant motion rule on each $N_i$.

Now, given two configurations $p,p'\in C_n$, the motion planner will connect $p$ to $p'$ as follows:
\begin{enumerate}
 \item we connect
both $p$ and $p'$, respectively, with the configurations $\rho(p,1)$ and $\rho(p',1)$ in $C^0_n$,
using the homotopy $\rho$;
\item we use the gradient flow of $f$ on $C^0_n\times C^0_n$ to move the couple $(\rho(p,1),\rho(p',1))$
to a new couple $(q,q')$ belonging to some critical manifold $N_i$, thus connecting $p$ to $q$
and $p'$ to $q'$;
\item we connect $q$ to $q'$ using the fixed motion rule on $N_i$.
\end{enumerate}
We can define, for all $0\leq i\leq 2n-4$, the subspace $K_i\subset C_n\times C_n$ as the (not necessarily
open) subspace containing pairs $(p,p')$ that in step (2) are mapped to a pair $(q,q')$ belonging
to a critical manifold $N_j$ of index precisely $i$. Then the hope is that the above motion rule is continuous
on each $K_i$: if this is the case, one can argue that each $K_i$ admits a $\T$-invariant open neighbourhood $U_i$
that deformation retracts onto $K_i$, and thus one 
can define a continuous motion rule on $U_i$. Since $K_i$ is a subspace of a manifold, one can expect
that the deformation of $U_i$ onto $K_i$ can be given by a very explicit formula, for example by
selecting, for all $(p,p')\in U_i$, the shortest path (with respect to the Euclidean metric on $\V^C\subset\C^{m-1}$)
joining $(p,p')$ with a pair in $K_i$; so one can expect that passing from $K_i$ to $U_i$ is not a big issue.
As far as I can see, there are three other, major issues in implementing this strategy.
\begin{itemize}
\item Lemma \ref{lem:density} does not give an explicit
way to perturb the explicit function $\tilde f$ given in Section \ref{sec:goodMorse}
to a $\T$-invariant Morse function $f$.
\item The motion rule described above may not be continuous
on the subspaces $K_i$: this problem is reflected in the fact that Theorem \ref{thm:Wasserman}
only produces a $\T$-cell complex, and we need to further subdivide cells (and to change the attaching
maps by a $\T$-equivariant homotopy) in order to obtain a $\T$-CW complex.
\item We need to list all critical manifolds $N_i$ for $f$.
\end{itemize}
For the first issue, one can use directly the gradient flow of $\tilde f$ to perturb a pair
in $C^0_n\times C^0_n$ to one in $\crit\pa{\tilde f}=\crit(g)\times \crit(g)$, using the function
$g\colon C^0_n\to\R$. Even assuming that $g$ is Morse (something we did not speculate on, and for a good reason, as
we will see soon), the connected components of $\crit\pa{\tilde f}$ have the form $\T/H_1\times \T/H_2$ and are thus 2-dimensional,
so $\tilde f$ is \emph{surely} not a $\T$-invariant Morse function.
Nevertheless one can fairly easily decompose each component
$\T/H_1\times \T/H_2$ of $\crit\pa{\tilde f}$ into 2 $\T$-invariant pieces, each of which has
a $\T$-equivariant deformation retraction onto an orbit of the action of $\T$ on $C^0_n\times C^0_n$.
So up to assuming that $g$ is Morse, the first issue can be solved or bypassed.

For the second issue, we need to understand the representation theory (over $\R$)
of $\T$ and of its finite subgroups, in order to decompose the $\T$-cell complex from Theorem \ref{thm:Wasserman}
into a $\T$-pre-CW complex: this is due to the actual construction underlying Theorem \ref{thm:decomposition}.
The representation theory of $\T$ and of finite cyclic groups is well understood,
so we can assume to be able to obtain an explicit $\T$-pre-CW complex $X$ onto which $C^0_n\times C^0_n$
deformation retracts $\T$-equivariantly. We can also hope that $X$ is already
a $\T$-CW complex, and redefine $K_i$ as the set of pairs $(p,p')$ that are mapped by the first two
steps of the motion planner to a pair $(q,q')$ in $X_i\setminus X_{i-1}$.
Let us assume for a moment that the second issue can also solved.

The third issue is, in my opinion, the biggest obstacle to implement an explicit
motion planner. Understanding
$\crit\pa{\tilde f}=\crit(g)\times\crit(g)$ boils down to understanding $\crit(g)$,
and even assuming that $g$ is a $\T$-invariant Morse function
we would need to make a list of the components of $\crit(g)$. And anyway,
we would still have to prove that $g$ is Morse!

We have given an explicit formula for $g$, and all properties that we used are essentially
the following: $g$ is a proper $\T$-invariant function on $C^0_n$ obtained as a sum of
absolute values of (locally defined) algebraic holomorphic functions.

Let us try to replace $g\colon C^0_n\to\R$ with another, similar function $g'\colon C^0_n\to \R$,
given by the formula
\[
 g'(w_1,\dots,w_n)=\sum_{i=1}^n|w_i|^2+\sum_{1\leq i<j\leq n}\frac{1}{|w_i-w_j|},
\]
for all $\set{w_1,\dots,w_n}\in C^0_n$.

The configurations in $\crit(g')$ are known in celestial mechanics
as the \emph{central configurations for the planar $n$-body problem}; for $n\leq 7$
it is known that $g'$ is a $\T$-invariant Morse function, and there is a complete list of components
of $\crit(g')$ \cite{CC}, but for higher $n$ there is no such a list, and it is even
unknown whether $g$ is a $\T$-equivariant Morse function or not: this latter question
is also known as the 6th problem in Smale's eighteen problems for the 21st century.
If the third issue is such a hard problem for $g'$, I do not expect that
it can be easily solved for $g$ (or for any other function with similar properties).

\bibliography{Bibliography.bib}{}

\begin{thebibliography}{10}
\bibitem{AndreottiFrankel}
A.~Andreotti and T.~Frankel.
\newblock The lefschetz theorem on hyperplane sections.
\newblock {\em Annals of Mathematics}, 69(3):713--717, 1959.

\bibitem{Bianchi:HcPn}
Andrea Bianchi.
\newblock On the homology of the commutator subgroup of the pure braid group.
\newblock \href{https://arxiv.org/abs/1905.05099}{arXiv:1905.05099}, 2019.

\bibitem{BianchiRecio}
Andrea Bianchi and David Recio-Mitter.
\newblock Topological complexity of unordered configuration spaces of surfaces.
\newblock {\em Algebraic and Geometric Topology}, 19:1359--1384, 2019.

\bibitem{RAG}
J.~Bochnak, M.~Coste, and M.-F Roy.
\newblock {\em Real Algebraic Geometry}, volume~36 of {\em A Series of Modern
  Surveys in Mathematics}.
\newblock Springer, 1996.

\bibitem{CG12}
H.~Colman and M.~Grant.
\newblock Equivariant topological complexity.
\newblock {\em Algebraic and Geometric Topology}, 12(4):2299--2316, 2012.

\bibitem{Farber}
Michael Farber.
\newblock Topological complexity of motion planning.
\newblock {\em Discrete and Computational Geometry}, 29:211--221, 2003.

\bibitem{FY}
Michael Farber and Sergey Yuzvinsky.
\newblock Topological robotics: Subspace arrangements and collision free motion
  planning.
\newblock {\em Transl. of AMS}, 212:145--156, 2004.

\bibitem{Illman}
S\"oren Illman.
\newblock The equivariant triangulation theorem for actions of compact lie
  groups.
\newblock {\em Math. Ann.}, 262:487--501, 1983.

\bibitem{Marzantowicz}
W~Marzantowicz.
\newblock A g-lusternik–schnirelman category of space with an action of a
  compact {L}ie group.
\newblock {\em Topology}, 28:403--412, 1989.

\bibitem{CC}
M.~Moczurad and P.~Zgliczy\'{n}ski.
\newblock Central configurations in planar n-body problem with equal masses for
  n=5,6,7.
\newblock {\em Celestial Mechanics and Dynamical Astronomy}, 131(46), 2019.

\bibitem{tomDieck}
Tamo {tom Dieck}.
\newblock {\em Transformation Groups}, volume~8 of {\em Studies in
  Mathematics}.
\newblock De Gruyter, 1987.

\bibitem{Wasserman}
A.~Wasserman.
\newblock Equivariant differential topology.
\newblock {\em Topology}, 8:127--150, 1969.

\end{thebibliography}
\bibliographystyle{plain}

\end{document}